\date{}
\newlength{\defbaselineskip}
\newcommand{\setlinespacing}[1]%
           {\setlength{\baselineskip}{#1 \defbaselineskip}}
\newcommand{\N}{{\mathbb{N}}}
\newcommand{\actaqed}{\hfill $\actabox$}
{\medskip\noindent \textit{Proof of #1. }}%
{\actaqed \medskip}
\def\D{{\mathcal D}}
\def\A{{\mathcal A}}
\def\cA{{\mathcal A}}
\def\C{{\mathcal C}}
\def\cC{{\mathcal C}}
\def \Tr{\mathcal T}
\def \cN{\mathcal N}
\def \cL{\mathcal L}
\def \cX{\mathcal X}
\def\R{{\mathbb R}}
\def\bbC{\mathbb C}
\def \<{\langle}
\def\>{\rangle}
\def \La{\Lambda}
\def \Og{\Omega}
\def \e{\varepsilon}
\def \va{\varepsilon}
\def \ff{\varphi}
\def \sp{\operatorname{span}}
\def\bx{\mathbf x}
\def\by{\mathbf y}
\def\bw{\mathbf w}
\def\bF{\mathbf F}
\newtheorem{Theorem}{Theorem}[section]
\newtheorem{Definition}{Definition}[section]
\newtheorem{Proposition}{Proposition}[section]
\newtheorem{Remark}{Remark}[section]
\newtheorem{Corollary}{Corollary}[section]
\numberwithin{equation}{section}
\newcommand{\be}{\begin{equation}}
\newcommand{\ee}{\end{equation}}
\def\da{{\delta}}
\def\Bl{\Bigl}
\def\Br{\Bigr}
\def\f{\frac}
\def\vi{\varphi}
\def\va{\varepsilon}
\def\CD{{\mathcal D}}
\def\CC{{\mathbb C}}
\def\NN{{\mathbb N}}
\def\Og{\Omega}
\def\sub{\substack}
\def\cN{\mathcal{N}}
\def\cA{\mathcal{A}}
\def\cX{\mathcal{X}}
\def\spn{\operatorname{span}}
\def\bx{\mathbf{x}}
\def\by{\mathbf{y}}
\def\bw{\mathbf{w}}
\def\bx{\mathbf{x}}
\DeclareSymbolFont{fouriersymbols}{FMS}{futm}{m}{n}
\DeclareSymbolFont{fourierlargesymbols}{FMX}{futm}{m}{n}
\DeclareMathDelimiter{\VT}{\mathord}{fouriersymbols}{152}{fourierlargesymbols}{147}
\begin{document}

\title{Lebesgue-type inequalities in sparse sampling recovery}

\author{ F. Dai and   V. Temlyakov 	\footnote{
		The first named author's research was partially supported by NSERC of Canada Discovery Grant
		RGPIN-2020-03909.
		The second named author's research was supported by the Russian Science Foundation (project No. 23-71-30001)
at the Lomonosov Moscow State University.
  }}

\newcommand{\Addresses}{{
  \bigskip
  \footnotesize

  F.~Dai, \textsc{ Department of Mathematical and Statistical Sciences\\
University of Alberta\\ Edmonton, Alberta T6G 2G1, Canada\\
E-mail:} \texttt{fdai@ualberta.ca }

 \medskip
  V.N. Temlyakov, \textsc{ Steklov Mathematical Institute of Russian Academy of Sciences, Moscow, Russia;\\ Lomonosov Moscow State University; \\ Moscow Center of Fundamental and Applied Mathematics; \\ University of South Carolina.
  \\
E-mail:} \texttt{temlyakovv@gmail.com}

}}
\maketitle

\begin{abstract}{
		Recently,  it has been discovered  that results on universal sampling discretization of the square norm are useful in sparse sampling recovery with error being  measured in the square norm. 
It was established that a simple greedy type algorithm -- Weak Orthogonal Matching Pursuit -- based on good points for universal 
discretization provides effective recovery in the square norm.
In this paper we extend those results by replacing the square norm with other integral norms. In this case we need to conduct our analysis in a Banach space rather than in
 a Hilbert space, making the techniques more involved. In particular, we establish that a   greedy type algorithm -- Weak Chebyshev Greedy Algorithm -- based on good points for the $L_p$-universal 
discretization provides good recovery in the $L_p$ norm for $2\le p<\infty$.
Furthermore,  we discuss the problem of stable recovery and demonstrate its close relationship with sampling discretization. 
}\end{abstract}

{\it Keywords and phrases}: Sampling discretization, universality, recovery.

{\it MSC classification 2000:} Primary 65J05; Secondary 42A05, 65D30, 41A63.

\section{Introduction}
\label{I}

The theory of Lebesgue-type inequalities has been extensively studied and developed in the context of greedy algorithms 
(see \cite{VTbookMA}, Ch. 8), particularly, for the Thresholding Greedy Algorithm with respect to bases (see \cite{VTbook} and \cite{VTbookB}). 
The main purpose of this paper is to 
 establish  some Lebesgue-type inequalities 
for algorithms that are  based on function evaluations.   Notably, recent advances  in universal discretization of integral norms achieved in \cite{DT, DTM2} prove to be 
 very useful  in this context.

 We begin with a brief 
description of  some  necessary concepts on 
sparse approximation.   Let $X$ be a Banach space with norm $\|\cdot\|:=\|\cdot\|_X$, and let $\D=\{g_i\}_{i=1}^\infty $ be a given (countable)  system of elements in $X$. Given a finite subset $J\subset \NN$, we define $V_J(\D):=\spn\{g_j:\  \ j\in J\}$. 
For a positive  integer $ v$, we denote by $\mathcal{X}_v(\D)$ the collection of all linear spaces $V_J(\D)$  with   $|J|=v$, and 
denote by $\Sigma_v(\D)$  the set of all $v$-term approximants with respect to $\D$; that is, 
$
\Sigma_v(\D):= \bigcup_{V\in\cX_v(\D)} V.
$
Given $f\in X$,  we define
$$
\sigma_v(f,\D)_X := \inf_{g\in\Sigma_v(\D)}\|f-g\|_X,\  \ v=1,2,\cdots.
$$ 
Moreover,   for a function class $\bF\subset X$, we define 
$$
 \sigma_v(\bF,\D)_X := \sup_{f\in\bF} \sigma_v(f,\D)_X,\quad  \sigma_0(\bF,\D)_X := \sup_{f\in\bF} \|f\|_X.
 $$

We are interested in the following problem on sparse sampling recovery. \\

{\bf Problem.} How to design a practical algorithm that gives  a  sparse sampling recovery approximant  with an error comparable to the best $v$-term approximation? \\

To answer this problem, we need to introduce some  definitions  from the theory of the Lebesgue-type inequalities for greedy algorithms  (see  \cite[Section 8.7]{VTbookMA}).
 In a general setting,  we consider an algorithm (i.e., an approximation method) $\A: = \{A_v(\cdot,\D)\}_{v=1}^\infty$ with respect to a given system $\D\subset X$, which is  a  sequence 
of mappings $A_v(\cdot,\D): X\to \Sigma_v(\CD)$, $v=1,2,\cdots$.  Clearly,  
$$
\|f-A_v(f,\D)\|_X \ge \sigma_v(f,\D)_X,\   \   \ v=1,2,\cdots,\   \ f\in X.$$
We are interested in those pairs $(\D,\A)$ for which the algorithm $\A$ provides approximation  that is close to the best $v$-term approximation.
To be more precise, let  $Y$ be a Banach space such that $Y\subset X$ and $\|\cdot\|_X \le \|\cdot\|_Y$.
The following two  definitions can be found in \cite[Section 8.7]{VTbookMA} for  the case of $X=Y$.

\begin{Definition}\label{GD1} Given a positive integer $u$, we say that a countable system $\D\subset Y$ is an almost greedy system of depth $u$ with respect to an algorithm  $\A = \{A_v(\cdot,\D)\}_{v=1}^\infty$ for the pair $(X,Y)$ of Banach spaces  if  
\begin{equation}\label{AG}
\|f-A_{C_1v}(f,\D)\|_X \le C_2\sigma_v(f,\D)_Y,\quad v=1,\dots,u,\  \  \forall f\in Y
\end{equation}
for some   constants $C_1\in\NN$ and   $C_2>0$.

In the case $C_1=1$ we call it greedy instead of almost greedy.
\end{Definition}

More generally, we have

\begin{Definition}\label{GD2}  Let   $\mathbf{a}=\{a(j)\} _{j=1}^\infty$ be a given  sequence of positive integers. 
	We say that a system $\D\subset Y$ is an $\mathbf{a}$-greedy system of depth $u\in\NN$ with respect to an algorithm  $\A = \{A_v(\cdot,\D)\}_{v=1}^\infty$  for the pair $(X,Y)$ if 
\begin{equation}\label{aG2}
\|f-A_{v'v}(f,\D)\|_X \le C_3\sigma_v(f,\D)_Y,\quad v=1,\dots,u,\  \ \forall f\in Y
\end{equation}
for some constant $C_3> 0$. 
\end{Definition}

Inequalities (\ref{AG}) and (\ref{aG2}) are called the Lebesgue-type inequalities for the algorithm $\A$.

%
%
%

In this paper $X$ and $Y$ will be the Lebesgue $L_p$ spaces. Let $\Omega$ be a compact subset of $\R^d$ with a probability measure $\mu$. By $L_p$ norm of a complex-valued function defined on $\Omega$ for $1\leq p<\infty$,  we understand
$$
\|f\|_p:=\|f\|_{L_p(\Omega,\mu)} := \left(\int_\Omega |f|^pd\mu\right)^{1/p}.
$$
By $L_\infty$ norm we understand the uniform norm of continuous functions
$$
\|f\|_\infty := \max_{\bx\in\Omega} |f(\bx)|
$$
and with a little abuse of notations we sometimes write $L_\infty(\Omega)$ for the space $\C(\Omega)$ of continuous functions on $\Omega$.

We now define three algorithms, which will be studied in this paper. Let $X_N$ be an $N$-dimensional subspace of the space of continuous functions $\C(\Omega)$. For a fixed $m\in\NN$ and a set of points  $\xi:=\{\xi^\nu\}_{\nu=1}^m\subset \Omega$ we associate with a function $f\in \C(\Omega)$ a vector (sample vector)
$$
S(f,\xi) := (f(\xi^1),\dots,f(\xi^m)) \in \bbC^m.
$$
Denote
$$
\|S(f,\xi)\|_p:=\|S(f,\xi)\|_{\ell_p^m}:= \left(\frac{1}{m}\sum_{\nu=1}^m |f(\xi^\nu)|^p\right)^{1/p},\quad 1\le p<\infty,
$$
and 
$$
\|S(f,\xi)\|_\infty := \max_{\nu}|f(\xi^\nu)|.
$$
Consider the following well known recovery operator (algorithm) (see, for instance, \cite{CM})
$$
\ell p(\xi)(f) := \ell p(\xi,X_N)(f):=\text{arg}\min_{u\in X_N} \|S(f-u,\xi)\|_{p}.
$$

Let $\CD=\D_N=\{\vi_i\}_{i=1}^N$ be a given  finite system  in the space $L_p(\Og,\mu)$ for some $2\leq p<\infty$.  Now we can formulate the three algorithms that  will be studied in this paper as follows.\\

{\bf Algorithm 1.} For a given system $\D_N$ and a set of points  $\xi:=\{\xi^\nu\}_{\nu=1}^m\subset \Omega$ define the algorithm 
$$
L(\xi,f) := \text{arg}\min_{L\in \cX_v(\D_N)}\|f-\ell p(\xi,L)(f)\|_p,
$$
\be\label{Alg1}
  \ell p(\xi,\cX_v(\D_N))(f):= \ell p(\xi,L(\xi,f))(f).
\ee

{\bf Algorithm 2.} For a given system $\D_N$ and a set of points  $\xi:=\{\xi^\nu\}_{\nu=1}^m\subset \Omega$ define the algorithm 
$$
L^s(\xi,f) := \text{arg}\min_{L\in \cX_v(\D_N)}\|S(f-\ell p(\xi,L)(f),\xi)\|_p,
$$
\be\label{Alg2}
  \ell p^s(\xi,\cX_v(\D_N))(f):= \ell p(\xi,L^s(\xi,f))(f).
\ee
Index $s$ stands here for {\it sample} to stress that this algorithm only uses the sample vector $S(f,\xi)$.
Clearly, $\ell p^s(\xi,\cX_v(\D_N))(f)$ is the best $v$-term approximation of $f$ with respect to $\D_N$ in 
the space $L_p(\xi) := L_p(\xi,\mu_m)$, where $\mu_m(\xi^\nu) =1/m$, $\nu=1,\dots,m$. 
To stress this fact we use the notation $B_v(f,\D_N,L_p(\xi)) := \ell p^s(\xi,\cX_v(\D_N))(f)$.

{\bf Algorithm 3.} This algorithm is a well known greedy algorithm -- the Weak Chebyshev Greedy Algorithm (the precise definition  will be given below). We apply this algorithm
in the space $L_p(\xi,\mu_m)$, which means that we only use $S(f,\xi)$ and the restriction on the set  $\xi$ of the system $\D_N$.

Algorithm 3 is the best from the point of view of practical realization. At each iteration this greedy algorithm searches over at most $N$ dictionary elements for choosing a new one and performs the $\ell_p$ projections on the appropriate subspace (alike the other two algorithms). On the other hand, the Algorithms 1 and 2 perform  $\binom{N}{v}$
iterations of the $\ell_p$ projections on the $v$-dimensional subspaces. Note that Algorithm 2 only uses
the function values at points $\xi$ and Algorithm 1 uses an extra information for choosing the $L(\xi,f)$.

Now we give the precise definition of  the Weak Chebyshev Greedy Algorithm (WCGA) in a Banach space,  which  was introduced in \cite{T1}  as a generalization  of the Weak Orthogonal Matching Pursuit (WOMP).
To be more precise, 
let $X^\ast$ denote the  dual of the Banach space $X$. 
For a nonzero element $g\in X$,  we denote by  $F_g $  a norming (peak) functional for $g$,  that is,  an element  $F_g\in X^\ast$ satisfying 
$$
\|F_g\|_{X^*} =1,\qquad F_g(g) =\|g\|_X.
$$
The existence of such a functional is guaranteed by the Hahn-Banach theorem.

Now we can define the WCGA as follows.\\

{\bf Weak Chebyshev Greedy Algorithm (WCGA).}  Let
$\tau := \{t_k\}_{k=1}^\infty$ be a given weakness sequence of  positive numbers $\leq 1$. Let $\D=\{g\}\subset X$ be a system of nonzero elements in $X$ such that $\|g\|\le 1$ for $g\in\D$. 
Given  $f_0\in X$, we define  the elements  $f_m\in X$ and $\psi_m\in \CD$  for $m=1,2,\cdots$ inductively   as follows:

\begin{enumerate}[\rm (1)]

	\item  $\psi_m  \in \D$ is any element satisfying
	$$
	|F_{f_{m-1}}(\psi_m)| \ge t_m\sup_{g\in\D}  | F_{f_{m-1}}(g )|.
	$$
	
	\item  Define
	$$
	\Psi(m) := \sp \{\psi_1,\cdots, \psi_m\},
	$$
	and let $G_m  := G_m(f_0, \CD)_X$  be the best approximant to $f_0$ from  the space $\Psi(m)$; that is, 
	$$G_m :=\underset {G\in \Psi(m)} {\operatorname{argmin}}\|f_0-G\|_X.$$
	
	\item  Define
	$$
	f_m := f_0-G_m.
	$$
	
\end{enumerate}

In this paper we  shall only consider the WCGA for  the case when $t_k=t\in (0, 1]$ for $k=1,2,\dots$. 
We also point out that in the case when $X$ is a Hilbert space,  the WCGA coincides with the well known  WOMP, which  is very popular in signal processing, and in particular, in compressed sensing. In approximation theory the WOMP is also called the Weak Orthogonal Greedy Algorithm (WOGA).

The main goal of this paper is to prove the Lebesgue-type inequalities for Algorithms 1--3. We shall prove these inequalities under certain conditions on the system $\D_N$. 

  \begin{Definition}\label{ID1} Let $1\le p <\infty$. We say that a set $\xi:= \{\xi^j\}_{j=1}^m \subset \Omega $ provides {\it one-sided  $L_p$-universal discretization}   for a collection $\cX:= \{X(n)\}_{n=1}^k$ of finite-dimensional  linear subspaces $X(n)\subset \mathcal{C}(\Og)$ if there exists a constant $D\ge 1$ such that 
 \be\label{I3}
 \|f\|_p \le D\left(\frac{1}{m} \sum_{j=1}^m |f(\xi^j)|^p\right)^{1/p} \quad \text{for any}\quad f\in \bigcup_{n=1}^k X(n) .
\ee
We denote by $m(\cX,p,D)$ the minimal $m\in\NN$ such that there exists a set $\xi$ of $m$ points, which
provides the one-sided $L_p$-universal discretization (\ref{I3}) for the collection $\cX$. 
\end{Definition}

We prove in Section \ref{ub} (see Theorem \ref{ubT3}, inequality (\ref{I6})) that every  system $\D_N$ that  provides  the one-sided $L_p$-universal discretization  (\ref{I3}) for the collection $\cX_u(\D_N)$ is a greedy system of depth $u$ with respect to Algorithm 1 for the pair $(L_p,L_\infty)$. Also, in Section \ref{ub} (see Theorem \ref{ubT5}, inequality (\ref{ub17})) we prove that every system $\D_N$ that  provides the one-sided $L_p$-universal discretization  (\ref{I3}) for the collection $\cX_{2u}(\D_N)$ is a greedy system of depth $u$ with respect to Algorithm 2 for the pair $(L_p,L_\infty)$. These are relatively easy results. The most difficult and interesting result of this paper is about Algorithm 3. In Section \ref{ub} we prove the corresponding result (see Theorem  \ref{IT1} below) under the following conditions on the system $\CD_N$. We assume  that 
$ \D_N:=\{\ff_j\}_{j=1}^N$ is a system of $N$  uniformly bounded functions on $\Og \subset \R^d$ such that
\be \label{I1}
\sup_{\bx\in\Og} |\vi_j(\bx)|\leq 1,\   \ 1\leq j\leq N,
\ee
and  there exists a constant $K>0$ such that   for any $(a_1,\cdots, a_N) \in\bbC^N,$
\begin{equation}\label{Bessel}
  \sum_{j=1}^N |a_j|^2 \le K  \left\|\sum_{j=1}^N a_j\ff_j\right\|^2_2 .
\end{equation}

The following Theorem \ref{IT1} is a conditional result, assuring  that the  Weak Chebyshev Greedy Algorithm (WCGA)  
provides good sparse recovery in the $L_p$ norm by using points of good $L_p$-universal discretization. Theorem \ref{IT2} below, proved in \cite{DT}, guarantees the existence of such points.   

Instead of the space $L_p(\Omega,\mu)$ we consider the space $L_p(\Omega_m,\mu_m)$
where $\Omega_m=\{\xi^\nu\}_{\nu=1}^m$ is from Definition \ref{ID1} and  $\mu_m(\xi^\nu) =1/m$, $\nu=1,\dots,m$. Let $\D_N(\Omega_m)$ be the restriction 
of $\D_N$ onto $\Omega_m$. Here and elsewhere in the paper,  we often use the notation $\Omega_m$ to denote the set 
$\xi$ in order to emphasize that the set $\xi$ plays the role of a new domain $\Omega_m$ consisting of $m$ points instead of 
the original domain $\Omega$. 

\begin{Theorem}\label{IT1}    Assume that $\D_N$  is a finite system    satisfying (\ref{I1}) and \eqref{Bessel} for some constant $K\ge 1$. Let  $p\in[2,\infty)$, $t\in (0,1]$ and  $D\ge 1$ be given parameters, and let  $V: = DK^{1/2}$.
Assume that $\xi=\{\xi^1,\cdots, \xi^m\}\subset \Og $  is a set  of $m$ points
in $\Og$ that provides the  one-sided  $L_p$-universal discretization (\ref{I3}) with the given constant $D$ for the collection 
$\cX_u(\D_N)$ and for some integers $1\leq u\leq N$ and  $m\ge m(\cX_u(\D_N),p,D)$.
Then 	 there exists a constant integer  $c=C(t,p)\ge 1$ depending only on $t$ and $p$ such that for any positive integer $v$  with  $(1+cV^2(\ln (Vv))) v\leq u$,  and for any given $f_0\in \cC(\Omega)$,  the WCGA with weakness parameter $t$ applied to $f_0$ with respect to the system  $\D_N(\Omega_m)$ in the space $L_p(\Omega_m,\mu_m)$ provides
\be\label{mp}
\|f_{c V^2(\ln (Vv)) v}\|_{L_p(\Omega_m,\mu_m)} \le C\sigma_v(f_0,\D_N(\Omega_m))_{L_p(\Omega_m,\mu_m)}, 
\ee
and
\be\label{mp2}
\|f_{c V^2(\ln (Vv)) v}\|_{L_p(\Omega,\mu)} \le CD\sigma_v(f_0,\D_N)_\infty,
\ee
where $C\ge 1$ is an absolute constant.
 \end{Theorem}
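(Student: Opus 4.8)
The plan is to prove the two estimates \eqref{mp} and \eqref{mp2} by working entirely on the discrete domain $\Omega_m$ and then transferring back to $\Omega$ via the one-sided universal discretization. First I would record what the hypotheses give us on $\Omega_m$. The uniform boundedness \eqref{I1} and the Bessel-type inequality \eqref{Bessel} are stated on $\Omega$, but the universal discretization \eqref{I3} lets us compare $\|\cdot\|_{L_p(\Omega,\mu)}$ with $\|\cdot\|_{L_p(\Omega_m,\mu_m)}$ on each $V\in\cX_u(\D_N)$. Using this together with the embedding $\|\cdot\|_2\le\|\cdot\|_p$ (valid for $p\ge 2$ on a probability space) and the reverse comparison on the $m$-point space, I would transport the Bessel inequality to $L_2(\Omega_m,\mu_m)$ for elements of the relevant low-dimensional subspaces, picking up the factor $V=DK^{1/2}$ in the constant. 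Thus on $\Omega_m$ the restricted system $\D_N(\Omega_m)$ behaves like a system satisfying the analogues of \eqref{I1} and \eqref{Bessel} with the Bessel constant replaced by something controlled by $V^2$.

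The heart of the argument is a Lebesgue-type inequality for the WCGA in $L_p$. The standard machinery (see \cite{VTbookMA}, Ch. 8, and the original \cite{T1}) shows that in a Banach space with modulus of smoothness $\rho(u)\le\gamma u^q$ — and $L_p(\Omega_m,\mu_m)$ for $2\le p<\infty$ has modulus of smoothness of power type $q=2$ with constant depending on $p$ — the WCGA applied to $f_0$ achieves, after a number of iterations that is roughly $cV^2\ln(Vv)\cdot v$, an error comparable to the best $v$-term approximation, provided the dictionary has an incoherence/frame-type property quantified by $V$. Concretely I would invoke the convergence rate of the WCGA: after $k$ steps one has a bound of the form $\|f_k\|_p \le C\,\sigma_v(f_0)_p$ once $k\ge cV^2\ln(Vv)\,v$, where the logarithmic overhead and the $V^2$ factor come from balancing the smoothness power $q=2$ against the $A_1$-type norm of the residual's sparse approximant, estimated through \eqref{Bessel} in the $L_2$-to-$L_p$ comparison above. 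This yields \eqref{mp} directly, since everything there lives on $\Omega_m$.

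To pass from \eqref{mp} to \eqref{mp2} I would apply the one-sided universal discretization \eqref{I3} in the direction that controls the continuous $L_p(\Omega,\mu)$ norm by the discrete $L_p(\Omega_m,\mu_m)$ norm. The output $f_{k}=f_0-G_{k}$ of the WCGA has $G_k\in\Sigma_{k}(\D_N)$; since $k\le u$ by the hypothesis $(1+cV^2\ln(Vv))v\le u$, the relevant element lies in a space belonging to $\cX_u(\D_N)$, so \eqref{I3} applies and gives $\|f_{k}\|_{L_p(\Omega,\mu)}\le D\|f_{k}\|_{L_p(\Omega_m,\mu_m)}$ after accounting for the fact that $f_0$ itself need not lie in the discretized subspace — here one splits $f_0$ as its best $v$-term continuous approximant plus a remainder and uses $\sigma_v(f_0,\D_N(\Omega_m))_{L_p(\Omega_m,\mu_m)}\le\sigma_v(f_0,\D_N)_\infty$, since the $\ell_\infty$-to-discrete-$\ell_p$ passage costs nothing on a probability space. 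Combining with \eqref{mp} produces the factor $CD$ in \eqref{mp2}.

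I expect the main obstacle to be the bookkeeping of constants in the WCGA convergence estimate, specifically showing that the number of iterations needed is exactly of order $V^2\ln(Vv)\,v$ rather than a larger power of $V$ or $v$. This requires a careful iteration of the smoothness inequality for $L_p(\Omega_m,\mu_m)$ together with a lower bound on the norming functional $|F_{f_{k-1}}(\psi_k)|$ expressed through the sparse structure of $f_0$'s best approximant; the delicate point is that the residual's best $v$-term approximant must be controlled in a way that feeds the frame constant $K$ (via \eqref{Bessel}) into the smoothness balance without degrading the logarithmic factor. A secondary technical subtlety is ensuring that the $L_2$-to-$L_p$ comparisons on the discrete space, used to move the Bessel inequality onto $\Omega_m$, are uniform over all the $v$-dimensional subspaces the greedy algorithm might select, which is precisely where the \emph{universality} of the discretization — rather than discretization for a single subspace — becomes essential.
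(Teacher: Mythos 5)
Your proposal follows essentially the same route as the paper: combine \eqref{Bessel} with $\|\cdot\|_2\le\|\cdot\|_p$ and the one-sided discretization \eqref{I3} to equip the restricted system $\D_N(\Omega_m)$ with a coefficient-control (incoherence) property with constant $V=DK^{1/2}$ in $L_p(\Omega_m,\mu_m)$, invoke the known Lebesgue-type inequality for the WCGA in a Banach space of smoothness power type $2$ (Theorem 2.7 of \cite{VT144}) to obtain \eqref{mp}, and derive \eqref{mp2} by splitting off a near-best $v$-term sup-norm approximant so that \eqref{I3} can legitimately be applied to the $u$-sparse element $f-G_{v'}$. The only cosmetic difference is that the paper packages the intermediate step as the $(v,S)$-incoherence property $\sum_{i\in A}|c_i|\le V|A|^{1/2}\bigl\|\sum_{i\in B}c_ig_i\bigr\|_{L_p(\Omega_m,\mu_m)}$ rather than as a Bessel inequality transported to the discrete $L_2$ space, but the chain of inequalities is the same.
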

 
 Theorem \ref{IT1} is a conditional result. It provides the Lebesgue-type inequalities (\ref{mp}) and (\ref{mp2}) under the condition that the set $\xi=\{\xi^\nu\}_{\nu=1}^m \subset \Omega$ of $m$ points
  provides the one-sided  $L_p$-universal discretization (\ref{I3}) for the collection $\cX_u(\D_N)$. We now formulate a known result from \cite{DT}, which established existence of good points for universal discretization.  We now proceed to a special case when   $ \D_N:=\{\ff_j\}_{j=1}^N$ is a uniformly bounded Riesz system. Namely, we assume (\ref{I1}) and instead of (\ref{Bessel}) we assume that
   for any $(a_1,\cdots, a_N) \in\bbC^N,$
\begin{equation}\label{Riesz}
R_1 \left( \sum_{j=1}^N |a_j|^2\right)^{1/2} \le \left\|\sum_{j=1}^N a_j\ff_j\right\|_2 \le R_2 \left( \sum_{j=1}^N |a_j|^2\right)^{1/2},
\end{equation}
where $0< R_1 \le R_2 <\infty$.  

	\begin{Theorem}\label{IT2}   Assume that $\D_N$ is a uniformly bounded Riesz system  satisfying (\ref{I1}) and \eqref{Riesz} for some constants $0<R_1\leq R_2<\infty$.
		Let $2<p<\infty$ and let  $1\leq v\leq N$ be an integer. 		
		 Then for a large enough constant $C=C(p,R_1,R_2)$ and any $\va\in (0, 1)$,   there exist 
		$m$ points  $\xi^1,\cdots, \xi^m\in  \Og$  with 
				\begin{equation*}
		m\leq C\va^{-7}       v^{p/2} (\log N)^2,
		\end{equation*}
			such that for any $f\in  \Sigma_v(\D_N)$, 
		\[ (1-\va) \|f\|_p^p \leq \frac   1m \sum_{j=1}^m |f(\xi^j)|^p\leq (1+\va) \|f\|_p^p. \]	
	\end{Theorem}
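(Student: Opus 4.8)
The statement is a two-sided \emph{universal} $L_p$-discretization (a Marcinkiewicz-type theorem) for the collection $\cX_v(\D_N)$, and the natural route is the probabilistic method combined with empirical-process/chaining estimates. The plan is to draw the nodes $\xi^1,\dots,\xi^m$ independently according to $\mu$ and to show that, once $m$ exceeds the stated bound, the asserted inequality holds simultaneously for all $f\in\Sigma_v(\D_N)$ with positive probability; any realization of such a point set then proves the theorem. By homogeneity it suffices to control, on the normalized index set $T:=\{f\in\Sigma_v(\D_N):\|f\|_p=1\}$, the empirical process
$$
Z_f:=\frac1m\sum_{j=1}^m|f(\xi^j)|^p-\|f\|_p^p ,
$$
and to prove $\sup_{f\in T}|Z_f|\le\varepsilon$. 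The indispensable preliminary is a Nikolskii-type inequality on each $v$-dimensional coordinate subspace: writing $f=\sum_{j\in J}a_j\ff_j$ with $|J|=v$, the bound (\ref{I1}) and Cauchy--Schwarz give $\|f\|_\infty\le\sum_{j\in J}|a_j|\le v^{1/2}(\sum_j|a_j|^2)^{1/2}$, and the lower Riesz bound in (\ref{Riesz}) turns this into $\|f\|_\infty\le R_1^{-1}v^{1/2}\|f\|_2\le R_1^{-1}v^{1/2}\|f\|_p$. Thus on $T$ one has $\|f\|_\infty\lesssim v^{1/2}$, the single estimate that drives the whole argument.

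Next I would treat a fixed $f\in T$ and then a fixed subspace. For fixed $f$ the summands $|f(\xi^j)|^p$ are i.i.d., have mean $\|f\|_p^p=1$, are bounded by $\|f\|_\infty^p\lesssim v^{p/2}$, and have variance at most $\|f\|_{2p}^{2p}\le\|f\|_\infty^p\lesssim v^{p/2}$; Bernstein's inequality therefore yields a pointwise tail $\bP(|Z_f|>\varepsilon)\le2\exp(-c\,m\varepsilon^2/v^{p/2})$, which is what forces the characteristic power $v^{p/2}$. To upgrade this to a uniform bound over a single subspace I would \emph{not} use one $\delta$-net (which already costs a spurious factor $v$), but chain over dyadic scales, using the elementary increment estimate $\big||a|^p-|b|^p\big|\le p\max(|a|,|b|)^{p-1}|a-b|$ to control $Z_f-Z_g$ by $\|f-g\|_\infty$ in a mixed sub-gaussian/sub-exponential fashion. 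Writing $\cN(G,\delta)$ for the $\delta$-covering number of a set $G$ in the sup-norm, the bound $\log\cN\lesssim v\log(1/\delta)$ for a $v$-dimensional unit ball combined with Dudley-type chaining then discretizes one subspace with $m\sim\varepsilon^{-2}v^{p/2}$ up to logarithmic factors.

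The universal step is where the real difficulty lies. A naive union bound over the $\binom{N}{v}\le N^v$ subspaces of $\cX_v(\D_N)$ would add $v\log N$ to the exponential budget and reintroduce an extra factor of $v$, producing $v^{p/2+1}$ rather than the asserted $v^{p/2}$. To avoid this I would run the chaining directly on the whole set $T\subset\Sigma_v(\D_N)$, whose covering numbers obey $\log\cN(T,\delta)\lesssim v\log N+v\log(1/\delta)$: the scale-independent term $v\log N$ then enters only through the \emph{square root} in the sub-gaussian part of the chaining integral, so it contributes logarithmically rather than multiplicatively in $v$. Feeding these entropy estimates into a mixed-tail generic-chaining bound for the expected supremum and then into Talagrand's concentration inequality for the empirical process, and balancing the sub-gaussian regime (governed by the $L_2$-diameter, where $\|f\|_2\le\|f\|_p$ is essential) against the sub-exponential regime (governed by $\|f\|_\infty\lesssim v^{1/2}$), yields a set of $m\le C\varepsilon^{-7}v^{p/2}(\log N)^2$ nodes for which $\sup_{f\in T}|Z_f|\le\varepsilon$ with probability bounded away from $0$.

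I expect the main obstacle to be exactly this balancing act: obtaining the sharp power $v^{p/2}$ together with only a polylogarithmic dependence on $N$ forces one to separate carefully the two regimes of the empirical process, exploiting both $\|f\|_2\le\|f\|_p$ (to keep the sub-gaussian diameter small) and the Nikolskii bound $\|f\|_\infty\lesssim v^{1/2}$ (to control the sub-exponential tail), while tracking how the entropy of the \emph{union} of subspaces distributes across scales. The final accounting of the several $\varepsilon$- and $\log N$-losses incurred along the chaining, and in passing from a one-sided to the two-sided estimate, is what produces the somewhat generous exponents $\varepsilon^{-7}$ and $(\log N)^2$; optimizing these is secondary to getting the power of $v$ right.
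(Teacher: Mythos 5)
First, a point of reference: the paper does not prove Theorem \ref{IT2} at all --- it is imported verbatim from \cite{DT} --- so the comparison here is against the proof in that reference. Your overall strategy (i.i.d.\ random nodes distributed according to $\mu$, reduction to the supremum of the empirical process $Z_f$ over the $L_p$-normalized set $T$, the Nikolskii-type bound $\|f\|_\infty\le R_1^{-1}v^{1/2}\|f\|_p$ obtained from \eqref{I1}, Cauchy--Schwarz and the lower Riesz inequality in \eqref{Riesz}, and a chaining argument driven by $L_\infty$-covering numbers of $\Sigma_v(\D_N)$) is indeed the strategy of the actual proof, which runs through a conditional discretization theorem stated in terms of entropy numbers $\varepsilon_k(\cdot,L_\infty)$ of the class. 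So the route is the right one.

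The gap is that the entire quantitative content of the theorem --- the exponent $p/2$ on $v$ together with only $(\log N)^2$ and $\varepsilon^{-7}$ --- is concentrated in the one step you describe but do not carry out, namely ``feeding these entropy estimates into a mixed-tail generic-chaining bound \dots yields $m\le C\varepsilon^{-7}v^{p/2}(\log N)^2$.'' This does not follow from any off-the-shelf bound, and the naive instantiations demonstrably fail. Concretely: the increment estimate $\bigl||a|^p-|b|^p\bigr|\le p\max(|a|,|b|)^{p-1}|a-b|$ together with $\|f\|_\infty\le Cv^{1/2}$ on $T$ makes the sub-Gaussian (Hoeffding) metric of the process of size $m^{-1/2}v^{(p-1)/2}\|f-g\|_\infty$; inserting your entropy bound $\log\mathcal{N}(T,\delta)\le C\bigl(v\log N+v\log(v^{1/2}/\delta)\bigr)$ into Dudley's integral over the range $\delta\le Cv^{1/2}$ gives $\mathbb{E}\sup_T|Z_f|\le C m^{-1/2}v^{(p+1)/2}(\log N)^{1/2}$, i.e.\ $m\ge c\varepsilon^{-2}v^{p+1}\log N$ --- off by a full factor of $v$ (and note the term $v\log N$ does \emph{not} ``contribute only logarithmically'': it sits under a square root but is still multiplied by the $L_\infty$-diameter $v^{1/2}$ of $T$, so it costs a factor $v$, not $\log N$). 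Even the vanilla mixed-tail (Bernstein/generic-chaining) bound, with its $\gamma_1$-functional taken against the scaled $L_\infty$ metric, produces $m\gtrsim v^{p/2+1}\log N$ from the scale-free entropy term. Recovering $v^{p/2}$ requires the genuinely delicate scale-by-scale variance-based chaining of \cite{DT} and its predecessors, in which the $L_2$-smallness of increments at fine scales and the $L_\infty$ bound at coarse scales are traded off nontrivially; this is the heart of the proof, and your sketch replaces it by an appeal to a bound that, as stated, does not give the claimed dependence on $v$ and $N$. Until that step is executed, or an explicit conditional theorem with matching hypotheses is invoked and verified, the proposal establishes the correct framework but not the stated estimate on $m$.
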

	
In the Problem formulated above we want to build a {\it practical} algorithm for sparse sampling recovery. 
An obvious necessary condition for a practical algorithm is its stability. We discuss stability property in Section \ref{lb}. Here, we only formulate a simple remark on stability of Algorithm 3.

\begin{Remark}\label{IR1} Set $v':=c V^2(\ln (Vv)) v$. By the definition of the WCGA we have 
$$
\|f_{v'}\|_{L_p(\Omega_m,\mu_m)} \le \|f_0\|_{L_p(\Omega_m,\mu_m)} \quad \text{and} \quad \|G_{v'}\|_{L_p(\Omega_m,\mu_m)} \le 2\|f_0\|_{L_p(\Omega_m,\mu_m)}.
$$
By discretization assumption we obtain
$$
\|G_{v'}\|_{L_p(\Omega,\mu)} \le D \|G_{v'}\|_{L_p(\Omega_m,\mu_m)} \le 2D\|S(f_0,\xi)\|_p.
$$
This means that the recovery operator (nonlinear) $G_{v'} : \ell_p^m \to L_p(\Omega,\mu)$ is stable. 
\end{Remark}

We now discuss an application of Theorems \ref{IT1} and \ref{IT2} to the optimal sampling recovery. The reader can find further discussions in Section \ref{D}. For a function class $\bF\subset \cC(\Omega)$,  we  define
$$
\varrho_m^o(\bF,L_p) := \inf_{\xi } \inf_{\Psi} \sup_{f\in \bF}\|f-\Psi(f(\xi^1),\dots,f(\xi^m))\|_p,
$$
where $\Psi$ ranges over all  mappings $\Psi : \bbC^m \to   L_p(\Omega,\mu)$  and
$\xi$ ranges over all subsets $\{\xi^1,\cdots,\xi^m\}$ of $m$ points in $\Og$. 
Here, we use the index {\it o} to mean optimality. The following Theorem \ref{IT3} is a direct corollary of 
Theorems \ref{IT1} and \ref{IT2}.

\begin{Theorem}\label{IT3} Assume that $\D_N$ is a uniformly bounded Riesz system  satisfying (\ref{I1}) and \eqref{Riesz} for some constants $0<R_1\leq R_2<\infty$.
Let $2<p<\infty$ and let  $1\leq v\leq N$ be an integer. There exist an absolute constant $C$ and a constant $C(p,R_1,R_2)$
such that for any compact $\bF\subset \cC(\Omega)$ we have for $m\ge C(p,R_1,R_2)(v\ln (2v))^{p/2} (\log N)^2$
\be\label{or}
\varrho_m^o(\bF,L_p) \le C\sigma_v(\bF,\D_N)_\infty,  
\ee
with recovery provided  by a simple greedy algorithm.
 \end{Theorem}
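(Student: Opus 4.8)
The plan is to read Theorem \ref{IT3} as a bookkeeping corollary of Theorems \ref{IT1} and \ref{IT2}. Since $\varrho_m^o(\bF,L_p)$ is an infimum over all point sets $\xi$ of cardinality $m$ and over all maps $\Psi:\bbC^m\to L_p(\Omega,\mu)$, it suffices to exhibit one admissible pair $(\xi,\Psi)$ achieving the claimed bound. I would take $\xi$ to be the points produced by Theorem \ref{IT2}, and let $\Psi$ send a sample vector to the greedy approximant $G_n$ generated by Algorithm 3 (the WCGA) run in $L_p(\Omega_m,\mu_m)$ for $n$ iterations. Because the WCGA uses only $S(f_0,\xi)$ and the restriction $\D_N(\Omega_m)$, the output $G_n$ is a genuine function of the sample vector alone, so $\Psi$ is a legitimate recovery map. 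As a preliminary step I record that a Riesz system (\ref{Riesz}) satisfies the Bessel-type bound (\ref{Bessel}) with $K=\max(1,R_1^{-2})\ge 1$, so that the hypotheses of Theorem \ref{IT1} are met.

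Next I would freeze the free parameters by taking weakness parameter $t=1$ and discretization accuracy $\va=1/2$. Applied to the class $\Sigma_u(\D_N)=\bigcup_{X\in\cX_u(\D_N)}X$, the two-sided estimate of Theorem \ref{IT2} yields the one-sided $L_p$-universal discretization (\ref{I3}) for $\cX_u(\D_N)$ with constant $D=(1-\va)^{-1/p}=2^{1/p}\le\sqrt2$. Consequently $V=DK^{1/2}$ is bounded by a constant depending only on $R_1$, and $c=C(t,p)$ depends only on $p$. The crucial point is that Theorem \ref{IT2} must be invoked at the \emph{inflated} sparsity level $u:=\lceil(1+cV^2(\ln(Vv)))v\rceil$ rather than at $v$: with this choice the hypothesis $(1+cV^2(\ln(Vv)))v\le u$ of Theorem \ref{IT1} holds, while $u\le C(p,R_1)\,v\ln(2v)$ because $V$ is a controlled constant and $\ln(Vv)\le C(R_1)\ln(2v)$ for $v\ge1$. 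Thus Theorem \ref{IT2} produces a point set $\xi$ of some cardinality $m_0$ with
$$m_0\le C(p,R_1,R_2)\,\va^{-7}u^{p/2}(\log N)^2\le C(p,R_1,R_2)\,(v\ln(2v))^{p/2}(\log N)^2,$$
which is exactly the threshold in the statement.

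With these points fixed, I would apply Theorem \ref{IT1} with $n:=cV^2(\ln(Vv))v$ iterations. Its inequality (\ref{mp2}) gives, for every $f_0\in\cC(\Omega)\supset\bF$,
$$\|f_0-G_n\|_{L_p(\Omega,\mu)}=\|f_n\|_{L_p(\Omega,\mu)}\le CD\,\sigma_v(f_0,\D_N)_\infty\le\sqrt2\,C\,\sigma_v(f_0,\D_N)_\infty,$$
with $C$ absolute. Taking the supremum over $f_0\in\bF$ and recalling $\Psi(S(f_0,\xi))=G_n$ bounds $\varrho_{m_0}^o(\bF,L_p)$ by an absolute multiple of $\sigma_v(\bF,\D_N)_\infty$. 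Finally, since adding sample points can only decrease the optimal recovery error (a recovery map may simply ignore the extra coordinates), $\varrho_m^o(\bF,L_p)\le\varrho_{m_0}^o(\bF,L_p)$ for every $m\ge m_0$, hence for every $m$ above the stated threshold, which completes the argument.

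The main obstacle is organisational rather than analytic: the clash of the symbol $v$ across Theorems \ref{IT1}, \ref{IT2} and \ref{IT3}. One must resist applying the discretization theorem at the recovery sparsity $v$ and instead apply it at the larger level $u\asymp v\ln(2v)$ demanded by the WCGA, then verify that the extra logarithmic factor is precisely what converts $u^{p/2}$ into the advertised $(v\ln(2v))^{p/2}$. A secondary point to check is that $V=DK^{1/2}$ genuinely collapses to a constant depending only on $p$ and $R_1$ once $t$ and $\va$ are frozen, and to dispose of the degenerate case $u>N$ (e.g.\ by enlarging $C(p,R_1,R_2)$ so that only the regime $u\le N$ is relevant).
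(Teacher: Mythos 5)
Your proposal is correct and is essentially the derivation the paper intends: Theorem \ref{IT3} is stated there as a direct corollary of Theorems \ref{IT1} and \ref{IT2}, obtained exactly as you describe by invoking Theorem \ref{IT2} with $\va$ fixed (giving $D=(1-\va)^{-1/p}$) at the inflated sparsity level $u\asymp v\ln(2v)$ required by the WCGA, noting $K=\max(1,R_1^{-2})$, and then applying inequality (\ref{mp2}). Your explicit handling of the parameter bookkeeping (and your flag of the degenerate regime $u>N$, which the paper itself glosses over) matches the intended argument.
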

 
 The reader can find further results on optimal sampling recovery and on stable sampling recovery in 
 Theorems \ref{ubT6} and \ref{lbT2}. 
 
\section{Some upper bounds}
\label{ub}

{\bf 2.1. Recovery by a greedy algorithm.} In this subsection we use a greedy algorithm -- Weak Chebyshev Greedy Algorithm -- to prove some 
upper bounds for optimal sparse recovery in the $L_p$ norm with $2<p<\infty$, namely, we prove Theorem \ref{IT1} here. Note that the case 
$p=2$ is studied in detail in the recent paper \cite{DTM2}. 

For notational convenience we consider here a countable system $\D=\{g_i\}_{i=1}^\infty$, which is sufficient for our applications, where the system is finite. We now formulate a result from \cite{VT144} (see also \cite{VTbookMA}, Section 8.7) under  the following  assumption (which is formulated as {\bf A3} in \cite{VT144}):

 {\bf IP($v,S$). ($v,S$)-incoherence property.}  Let $X$ be a Banach space with a norm $\|\cdot\|$. We say that a system $\D=\{g_i\}_{i=1}^\infty\subset X$ has   ($v,S$)-incoherence property with parameters   $V>0$ and $r>0$ in $X$  if for any $A\subset B$ with  $|A|\le v$ and  $|B|\le S$,  and  for any $\{c_i\}_{i\in B}\subset \CC$, we have
$$
\sum_{i\in A} |c_i| \le V|A|^r\Bl\|\sum_{i\in B} c_ig_i\Br\|.
$$

Recall that  the modulus of smoothness of a Banach space  $X$ is defined as 
\begin{equation}\label{ub1}
\rho(u):=\rho(X,u):=\sup_{\sub{x,y\in X\\
		\|x\|= \|y\|=1}}\Bigg[\f {\|x+uy\|+\|x-uy\|}2 -1\Bigg],\  \ u>0,
\end{equation}
and that $X$ is called uniformly smooth  if  $\rho(u)/u\to 0$ when $u\to 0+$.
It is wellknown that the $L_p$ space with $2\le p<\infty$ is a uniformly smooth Banach space with 
\be\label{ub2}
\rho(L_p,u)\le (p-1)u^2/2,\  \ u>0.
\ee

The following Theorem \ref{ubT1} was proved in \cite{VT144} for real Banach spaces (see also \cite{VTbookMA}, Section 8.7, Theorem 8.7.17, p.431) and in \cite{DGHKT} for complex Banach spaces. Note that this theorem was proved there under condition that $\D$ is a dictionary but its proof works for a system as well. 
 \begin{Theorem}[{\cite[Theorem 2.7]{VT144}, \cite[Theorem 8.7.17]{VTbookMA}}]\label{ubT1} Let $X$ be a Banach space satisfying that  $\rho(X, u)\le \gamma u^q$, $u>0$ for some parameter $1<q\le 2$. Suppose that $\D\subset X$  is a system in $X$ with the  ($v,S$)-incoherence property for some integers $1\leq v\leq S$ and  parameters   $V>0$ and $r>0$.   Then the WCGA with weakness parameter $t$ applied to $f_0$ and the system $\CD$ provides
$$
\|f_{C(t,\gamma,q)V^{q'}\ln (Vv) v^{rq'}}\| \le C\sigma_v(f_0,\D)_X
$$
for any positive integer  $v$ satisfying $$v+C(t,\gamma,q)V^{q'}\ln (Vv) v^{rq'}\le S,$$ where $$q':=\f q{q-1},\   \ C(t,\gamma,q) = C(q)\gamma^{\frac{1}{q-1}}  t^{-q'},$$ and $C>1$ is an absolute constant. 
\end{Theorem}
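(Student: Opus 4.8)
The plan is to prove the theorem by reducing it to a one-step recursive estimate for the residual norms $a_m:=\|f_m\|$ and then iterating. First I would record the Chebyshev property of the WCGA: since $G_m$ is the best approximant to $f_0$ from $\Psi(m)$ and $G_{m-1}+\lambda\psi_m\in\Psi(m)$ for every scalar $\lambda$, we have $a_m\le\|f_{m-1}-\lambda\psi_m\|$. Applying the definition \eqref{ub1} of the modulus of smoothness to $x=f_{m-1}$, $y=\psi_m$ (recall $\|\psi_m\|\le1$), combined with $\|f_{m-1}+\lambda\psi_m\|\ge\mathrm{Re}\,F_{f_{m-1}}(f_{m-1}+\lambda\psi_m)$ and a suitable choice of the phase of $\lambda$, together with $\rho(X,u)\le\gamma u^q$ and the weak greedy selection $|F_{f_{m-1}}(\psi_m)|\ge t\,S_{m-1}$, yields for every $\lambda>0$
\[
a_m\le a_{m-1}+2\gamma\lambda^q a_{m-1}^{1-q}-\lambda\,t\,S_{m-1},\qquad S_{m-1}:=\sup_{g\in\D}|F_{f_{m-1}}(g)|.
\]

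The heart of the argument is a lower bound for $S_{m-1}$ that is proportional to $a_{m-1}$, with a constant depending only on $V,v,r$ and crucially \emph{not} on $\|f_0\|$. Fix a near-best approximant $G=\sum_{i\in A}c_ig_i$, $|A|\le v$, with $\|f_0-G\|\le\sigma:=\sigma_v(f_0,\D)_X$. Because $G_{m-1}$ is a best approximant from $\Psi(m-1)$, its norming functional $F_{f_{m-1}}$ annihilates $\Psi(m-1)$, so $a_{m-1}=F_{f_{m-1}}(f_0)$; writing $f_0=G+(f_0-G)$ and discarding the already chosen coordinates $A\cap\Lambda^{m-1}$ (on which $F_{f_{m-1}}$ vanishes), one gets
\[
a_{m-1}-\sigma\le|F_{f_{m-1}}(G)|\le\Big(\sum_{i\in A'}|c_i|\Big)S_{m-1},\qquad A':=A\setminus\Lambda^{m-1}.
\]
The decisive step is to bound $\sum_{i\in A'}|c_i|$ by $a_{m-1}+\sigma$ rather than by $\|f_0\|$: one checks that $\sum_{i\in A'}c_ig_i$ differs from $f_{m-1}-(f_0-G)$ by an element of $\Psi(m-1)=\spn\{g_i:i\in\Lambda^{m-1}\}$, so applying the $(v,S)$-incoherence property with $B:=A'\cup\Lambda^{m-1}$ to the coefficients representing $f_{m-1}-(f_0-G)$ on $B$ gives
\[
\sum_{i\in A'}|c_i|\le V|A'|^r\,\|f_{m-1}-(f_0-G)\|\le Vv^r(a_{m-1}+\sigma).
\]
This needs $|B|\le v+(m-1)\le S$, which is exactly the source of the hypothesis $v+C(t,\gamma,q)V^{q'}\ln(Vv)v^{rq'}\le S$. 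Hence $S_{m-1}\ge(a_{m-1}-\sigma)\big/\big(Vv^r(a_{m-1}+\sigma)\big)$, and in particular $S_{m-1}\ge 1/(3Vv^r)$ whenever $a_{m-1}\ge2\sigma$.

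With this lower bound in hand, for every step with $a_{m-1}\ge2\sigma$ I would optimize the recursion over $\lambda>0$; the optimal $\lambda$ is of order $a_{m-1}\big(t/(Vv^r\gamma)\big)^{1/(q-1)}$ and produces a fixed-factor contraction
\[
a_m\le a_{m-1}\Big(1-c_0\,\gamma^{-1/(q-1)}t^{q'}(Vv^r)^{-q'}\Big),\qquad q'=\tfrac{q}{q-1},
\]
with $c_0=c_0(q)>0$. Since $(a_m)$ is non-increasing (the spaces $\Psi(m)$ are nested), it decreases until it first drops below $2\sigma$ and then stays there. Iterating the contraction and solving for the number of steps needed to reach the level $C\sigma$ should produce the stated count $m=C(t,\gamma,q)V^{q'}\ln(Vv)v^{rq'}$ with $C(t,\gamma,q)=C(q)\gamma^{1/(q-1)}t^{-q'}$, the bookkeeping of the phase in the modulus step being what separates the real case (\cite{VT144}) from the complex case (\cite{DGHKT}).

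The main obstacle I expect is precisely the passage from the per-step contraction to the sharp iteration count: a naive iteration of a constant-factor geometric decay produces $(Vv^r)^{q'}$ times a logarithm, and isolating exactly the factor $\ln(Vv)$ (in particular making the count independent of the ratio $\|f_0\|/\sigma$) is the delicate point, where I would follow the careful recursion analysis of \cite{VT144}. Simultaneously one must track that the number of chosen indices never exceeds the budget $|B|\le S$ used in the incoherence step. A secondary technical point, needed to make the lower bound on $S_{m-1}$ rigorous, is the standard characterization of best approximation in a smooth Banach space by the vanishing of the norming functional on the approximating subspace.
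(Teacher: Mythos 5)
Note first that the paper contains no proof of this statement: Theorem \ref{ubT1} is imported verbatim from \cite{VT144} (real scalars) and \cite{DGHKT} (complex scalars), so the comparison is with the argument given there. Your opening is exactly the opening of that argument, and it is correct: the Chebyshev property plus the modulus of smoothness gives $\|f_m\|\le \|f_{m-1}\|+2\gamma\lambda^q\|f_{m-1}\|^{1-q}-\lambda t S_{m-1}$ (with the phase adjustment for complex scalars, which is indeed the point of \cite{DGHKT}); uniform smoothness gives uniqueness of the norming functional and hence $F_{f_{m-1}}$ annihilates $\Psi(m-1)$; and your application of IP($v,S$) to the representation of $G-G_{m-1}$ on $B=A'\cup\Lambda^{m-1}$, yielding $\sum_{i\in A'}|c_i|\le Vv^r\bigl(\|f_{m-1}\|+\sigma\bigr)$ with the budget $|B|\le v+(m-1)\le S$, is precisely how the incoherence hypothesis enters in \cite{VT144}. (Small repair: take $G$ with $\|f_0-G\|\le(1+\delta)\sigma$, since the infimum defining $\sigma_v(f_0,\D)_X$ need not be attained.)

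The endgame, however, is a genuine gap --- and you correctly identified it, but flagging it is not the same as closing it, and it is where the entire content of the theorem lies. Iterating your fixed-factor contraction $a_m\le a_{m-1}\bigl(1-c_0\gamma^{-1/(q-1)}t^{q'}(Vv^r)^{-q'}\bigr)$ down to the level $2\sigma$ yields an iteration count of order $\gamma^{1/(q-1)}t^{-q'}(Vv^r)^{q'}\ln\bigl(\|f_0\|/\sigma\bigr)$, and no bookkeeping converts $\ln(\|f_0\|/\sigma)$ into $\ln(Vv)$: the ratio is unbounded over admissible $f_0$, and in the extreme case $\sigma_v(f_0,\D)_X=0$ (exactly $v$-sparse $f_0$) a geometric decay never reaches zero in finitely many steps, whereas the theorem asserts $f_{v'}=0$ after $v'=C(t,\gamma,q)V^{q'}\ln(Vv)v^{rq'}$ iterations --- precisely the exact-recovery consequence this paper exploits in Section \ref{D}. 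The proof in \cite{VT144} (see also \cite[Section 8.7]{VTbookMA}) is not a single contraction run from $\|f_0\|$ to $2\sigma$. Roughly, it fixes the near-best approximant $G=\sum_{i\in A}c_ig_i$ and partitions $A$ into $O(\ln(Vv))$ blocks by dyadic size of the coefficients $|c_i|$ (IP applied to singletons gives $|c_i|\le V\|G\|$, and coefficients below the scale $\max_i|c_i|/(Vv)$ collectively contribute only an $O(\sigma)$-type error), then proves by induction that the blocks are captured into the chosen index set within phases whose lengths are governed by $(V|A'|^{\,r})^{q'}$, your recursion being run in each phase against the truncated approximant built from the not-yet-captured large coefficients, so that the starting level of a phase is the block's coefficient scale rather than $\|f_0\|$; once $A\subset\Lambda^{m}$ one concludes $\|f_m\|\le\|f_0-G\|$ because $G\in\Psi(m)$. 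This support-capture induction is the missing idea, not a routine refinement of your geometric decay; deferring it wholesale to \cite{VT144} leaves the theorem's characteristic $f_0$-independent $\ln(Vv)$ count unproven in your write-up.
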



Now we are in a position to prove Theorem \ref{IT1}.\\
   
\begin{proof}[Proof of Theorem \ref{IT1}] We use discretization properties 
(\ref{I3}) in order to establish property {\bf IP($v,S$)} for the system $\D:=\D_N(\Omega_m)$ being the restriction 
of $\D_N$ onto $\Omega_m$. For any $A$ and $B$ such that $A\subset B$
the inequality (\ref{Bessel}) implies
$$
\sum_{i\in A}|c_i| \le |A|^{1/2}\left(\sum_{i\in A}|c_i|^2\right)^{1/2} \le |A|^{1/2}\left(\sum_{i\in B}|c_i|^2\right)^{1/2} 
$$
$$
\le |A|^{1/2}K^{1/2} \|\sum_{i\in B} c_i\ff_i\|_2 \le  |A|^{1/2}K^{1/2} \|\sum_{i\in B} c_i\ff_i\|_p.
$$
This means that the system $\D_N$ has the {\bf IP}$(v,S)$ in the $L_p(\Omega,\mu)$ with any parameters $v\le N$ and $v\le S\le N$ and the constants $V_1 =  K^{1/2}$ and $r=1/2$. Our assumption that the set $\xi$ provides the one-sided $L_p$-universal discretization  
(\ref{I3}) implies that the discretized system $\D_N(\Omega_m)$ has the {\bf IP($u,u$)} in the $L_p(\Omega_m,\mu_m)$ with the constant $V =V_1 D$. 

We now define a system $\D_N$ in the $L_p(\Omega_m,\mu_m)$ as the  system $\D_N(\Omega_m)$. Clearly, our assumption (\ref{I1}) implies that for any $g\in  \D_N(\Omega_m)$
we have $\|g\|_{L_p(\Omega_m,\mu_m)} \le 1$. 
We plan to apply the WCGA algorithm with the weakness parameter $t$. We now set $c=C(t,\gamma,q)$ and for a given $v$  set $u=v+c V^2(\ln (Vv)) v$ with the constant $C(t,\gamma,q)$ from Theorem \ref{ubT1} and $V=DK^{1/2}$. Note that by (\ref{ub2}) we have $q=2$ and $\gamma=(p-1)/2$. 
It remains to apply Theorem \ref{ubT1}. This proves (\ref{mp}) in Theorem \ref{IT1}. 

We now derive (\ref{mp2}) from (\ref{mp}).   Clearly, 
$$
\sigma_v(f_0,\D_N(\Omega_m))_{L_p(\Omega_m,\mu_m)} \le \sigma_v(f_0,\D_N )_\infty.
$$
Let $f\in \Sigma_v(\D_N)$ be such that  $\|f_0-f\|_\infty \le 2 \sigma_v(f_0,\D_N)_\infty$. Let us set $v':= c V^2(\ln (Vv)) v$ for brevity. Then (\ref{mp}) implies 
$$
\|f - G_{v'}(f_0,\D_N(\Omega_m))\|_{L_p(\Omega_m,\mu_m)} \le \|f-f_0\|_{L_p(\Omega_m,\mu_m)} +\|f_{v'}\|_{L_p(\Omega_m,\mu_m)} 
$$
$$
\le (2+C)\sigma_v(f_0,\D_N)_\infty.
$$
Using that $f - G_{v'}(f_0,\D_N(\Omega_m)) \in \Sigma_u(\D_N)$, by discretization (\ref{I3}) we 
conclude that
\be\label{ub3}
\|f - G_{v'}(f_0,\D_N(\Omega_m))\|_{L_p(\Omega,\mu)} \le D(2+C)\sigma_v(f_0,\D_N)_\infty.
\ee
Finally,
$$
\|f_{v'}\|_{L_p(\Omega,\mu)} \le \|f-f_0\|_{L_p(\Omega,\mu)} + \|f - G_{v'}(f_0,\D_N(\Omega_m))\|_{L_p(\Omega,\mu)}.
$$
This and (\ref{ub3}) prove (\ref{mp2}).

\end{proof}


In  Theorem \ref{IT1},  the  WCGA 
provides an  error estimate  (\ref{mp})  in the discrete norm $L_p(\Omega_m,\mu_m)$. However,
a slight modification of the above  proof of Theorem \ref{IT1} also yields a similar error estimate in the norm of  $L_p(\Omega,\mu)$
rather than in the discrete norm $L_p(\Omega_m,\mu_m)$.

\begin{Corollary}\label{ssrR1}
	Under the conditions of Theorem \ref{IT1}, we have 
	\be\label{mp3}
	\|f_{cV^2 (\ln (Vv)) v} \|_{L_p(\Omega,\mu)} \le C D \sigma_v(f_0,\CD_N)_{L_p(\Og, \mu_\xi)},
	\ee
	where   $c=c(t,p)\ge 1$ is  a constant integer and
	$$
	\mu_\xi := \f {\mu+\mu_m}2=\frac{1}{2} \mu + \frac{1}{2m} \sum_{j=1}^m \delta_{\xi^j}.
	$$
\end{Corollary}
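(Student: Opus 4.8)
\textbf{Proof proposal for Corollary \ref{ssrR1}.}
The plan is to rerun the proof of Theorem \ref{IT1}, but to carry out the final step with respect to the averaged measure $\mu_\xi = (\mu+\mu_m)/2$ in place of the original measure $\mu$, so that the best $v$-term approximation is measured in the $L_p(\Og,\mu_\xi)$ norm rather than in the sup norm. The key observation is that the discrete error estimate \eqref{mp} produced by the WCGA is already in hand unchanged: the WCGA is applied exactly as before in the space $L_p(\Omega_m,\mu_m)$, using only the sample values and the restriction $\D_N(\Omega_m)$, and its output $f_{v'}$ with $v'=cV^2(\ln(Vv))v$ satisfies
\[
\|f_{v'}\|_{L_p(\Omega_m,\mu_m)} \le C\,\sigma_v(f_0,\D_N(\Omega_m))_{L_p(\Omega_m,\mu_m)}.
\]
So the only thing that changes relative to Theorem \ref{IT1} is the bookkeeping that converts the right-hand side into a bound against the appropriate $\sigma_v$ of $f_0$, and the left-hand side into the genuine $L_p(\Omega,\mu)$ norm.

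For the right-hand side, I would pick a near-best approximant $f\in\Sigma_v(\D_N)$ with $\|f_0-f\|_{L_p(\Og,\mu_\xi)} \le 2\,\sigma_v(f_0,\D_N)_{L_p(\Og,\mu_\xi)}$, exactly mirroring the choice of near-best $f$ in the proof of Theorem \ref{IT1}. The crucial point is that, since $\mu_\xi = \tfrac12\mu+\tfrac12\mu_m$, we have the elementary domination $\tfrac12\|h\|_{L_p(\Omega_m,\mu_m)}^p \le \|h\|_{L_p(\Og,\mu_\xi)}^p$ for any continuous $h$, hence
\[
\|h\|_{L_p(\Omega_m,\mu_m)} \le 2^{1/p}\,\|h\|_{L_p(\Og,\mu_\xi)},
\]
and in particular $\sigma_v(f_0,\D_N(\Omega_m))_{L_p(\Omega_m,\mu_m)} \le 2^{1/p}\,\sigma_v(f_0,\D_N)_{L_p(\Og,\mu_\xi)}$ by testing against this same $f$. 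This upgrades the right side of \eqref{mp} from the discrete $\sigma_v$ to the $L_p(\Og,\mu_\xi)$ version of $\sigma_v$, which is exactly what \eqref{mp3} requires, and it is the step that motivates averaging $\mu$ with $\mu_m$ in the first place.

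For the left-hand side I would follow the same triangle-inequality/transference argument used to derive \eqref{mp2} from \eqref{mp}. Writing $G_{v'}$ for the WCGA approximant so that $f_{v'}=f_0-G_{v'}$, I estimate $\|f-G_{v'}\|_{L_p(\Omega_m,\mu_m)}$ by $\|f-f_0\|_{L_p(\Omega_m,\mu_m)}+\|f_{v'}\|_{L_p(\Omega_m,\mu_m)}$, bounding the first term by $2^{1/p}\|f-f_0\|_{L_p(\Og,\mu_\xi)}$ via the domination above and the second by the discrete estimate just obtained. Then, because $f-G_{v'}\in\Sigma_u(\D_N)$, the one-sided universal discretization \eqref{I3} transfers this discrete bound to the continuous norm: $\|f-G_{v'}\|_{L_p(\Og,\mu)} \le D\,\|f-G_{v'}\|_{L_p(\Omega_m,\mu_m)}$. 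A final triangle inequality $\|f_{v'}\|_{L_p(\Og,\mu)} \le \|f-f_0\|_{L_p(\Og,\mu)} + \|f-G_{v'}\|_{L_p(\Og,\mu)}$, together with $\|f-f_0\|_{L_p(\Og,\mu)} \le 2^{1/p}\|f-f_0\|_{L_p(\Og,\mu_\xi)}$, collapses everything onto $\sigma_v(f_0,\D_N)_{L_p(\Og,\mu_\xi)}$ and yields \eqref{mp3} after absorbing the harmless $2^{1/p}$ factors into the absolute constant $C$.

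The argument is almost entirely a transcription of the proof of Theorem \ref{IT1}, so I do not expect a serious obstacle; the one point that must be handled with care is the direction of the inequalities relating the three norms $L_p(\Omega_m,\mu_m)$, $L_p(\Og,\mu)$ and $L_p(\Og,\mu_\xi)$. The domination $L_p(\mu_m)\lesssim L_p(\mu_\xi)$ and $L_p(\mu)\lesssim L_p(\mu_\xi)$ holds unconditionally (it is just that $\mu_\xi$ majorizes $\tfrac12\mu_m$ and $\tfrac12\mu$), whereas the reverse transfer from the discrete norm to $L_p(\mu)$ is the nontrivial one and is exactly where the discretization hypothesis \eqref{I3} (and hence the factor $D$) must be invoked, precisely as in \eqref{ub3}. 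Keeping track of which bound needs \eqref{I3} and which is free is the only subtlety.
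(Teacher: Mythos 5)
Your proposal is correct and follows essentially the same route as the paper's proof: both pick a near-best approximant $g\in\Sigma_v(\D_N)$ in the $L_p(\Og,\mu_\xi)$ norm, use the elementary dominations $\|h\|_{L_p(\mu_m)}\le 2^{1/p}\|h\|_{L_p(\mu_\xi)}$ and $\|h\|_{L_p(\mu)}\le 2^{1/p}\|h\|_{L_p(\mu_\xi)}$ for the cheap directions, and invoke the one-sided discretization (\ref{I3}) only for the element $g-G_{v'}\in\Sigma_u(\D_N)$, combined with the discrete WCGA estimate (\ref{mp}). The only difference is trivial bookkeeping (you transfer $g-G_{v'}$ from $L_p(\mu_m)$ directly to $L_p(\mu)$, while the paper routes through $L_p(\mu_\xi)$), so the argument is sound as written.
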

\begin{proof}
	For convenience, we will use the notation $\|\cdot\|_{L_p(\nu)}$  to denote the norm of $L_p$ defined with respect to a measure $\nu$ on $\Og$. 
	Let $g\in \Sigma_v(\CD_N)$ be such that  $\|f_0-g\|_{L_p(\mu_\xi)} \le 2 \sigma_v(f_0,\CD_N)_{L_p(\mu_\xi)}$.
	Let as above $v':=cV^2 (\ln (Vv)) v$. 
	Then \begin{align*}
	\|f_{v'} \|_{L_p(\mu)}&\leq
	2^{1/p} \|f_0 - G_{v'}(f_0,\CD_N(\Omega_m))\|_{L_p(\mu_\xi)}\\
	&\leq 2^{1/p}\|f_0-g\|_{L_p(\mu_\xi)}+2^{1/p} \|g- G_{v'}(f_0,\CD_N(\Omega_m))\|_{L_p(\mu_\xi)}\\
	&\leq 2^{1+\f1p} \sigma_v(f_0,\CD_N)_{L_p(\mu_\xi)}+ 2^{1/p}\|g- G_{v'}(f_0,\CD_N(\Omega_m))\|_{L_p(\mu_\xi)}.
	\end{align*}
	Since
	\[ g- G_{v'}(f_0,\CD_N(\Omega_m))\in \Sigma_{v+v'} (\CD_N) \subset \Sigma_u(\CD_N),\]
	it follows by the one-sided  universal discretization that 
	\begin{align*} \|g-& G_{v'}(f_0,\CD_N(\Omega_m))\|_{L_p(\mu_\xi)}\leq CD  \|g- G_{v'}(f_0,\CD_N(\Omega_m))\|_{L_p(\mu_m)}\\
	&	\leq C D\|f_0-g\|_{L_p(\mu_m)}+C D \|f_0- G_{v'}(f_0,\CD_N(\Omega_m))\|_{L_p(\mu_m)}\\
	&\leq CD \|f_0-g\|_{L_p(\mu_\xi)}+ C \|f_{v'} \|_{L_p(\mu_m)},\end{align*}
	which, by Theorem \ref{IT2}, is estimated by 
	\begin{align*}
	&\leq CD  \sigma_v(f_0,\CD_N)_{L_p(\mu_\xi)}+ CD\sigma_v(f_0,\CD_N(\Omega_m))_{L_p(\mu_m)}\leq C D \sigma_v(f_0,\CD_N)_{L_p(\mu_\xi)}.
	\end{align*}

\end{proof}

{\bf 2.2. Recovery by $\ell_p$ minimization.}  We recall some notations and formulate a conditional result from \cite{DTM1}, which is similar to the one from \cite{VT183}. 
Let $X_N$ be an $N$-dimensional subspace of the space of continuous functions $\C(\Omega)$. For a fixed $m$ and a set of points  $\xi:=\{\xi^\nu\}_{\nu=1}^m\subset \Omega$ we associate (as above) with a function $f\in \C(\Omega)$ a vector (sample vector) $S(f,\xi) := (f(\xi^1),\dots,f(\xi^m))$.
 In addition to the norms $\|S(f,\xi)\|_{p}$, $1\le p\le \infty$, defined above, we consider 
for a positive weight $\bw:=(w_1,\dots,w_m)\in \R^m$   the following norm
$$
\|S(f,\xi)\|_{p,\bw}:= \left(\sum_{\nu=1}^m w_\nu |f(\xi^\nu)|^p\right)^{1/p},\quad 1\le p<\infty.
$$
Define the best approximation of $f\in L_p(\Omega,\mu)$, $1\le p\le \infty$ by elements of $X_N$ as follows
$$
d(f,X_N)_{L_p(\Og, \mu)} := \inf_{u\in X_N} \|f-u\|_{L_p(\Og, \mu)}.
$$

 We make the following two assumptions.

{\bf A1. Discretization.} Let $1\le p\le \infty$. Suppose that $\xi:=\{\xi^1,\cdots,\xi^m\}\subset \Omega$ is such that for any 
$u\in X_N$ in the case $1\le p<\infty$ we have
$$
C_1\|u\|_{L_p(\Og,\mu)} \le \|S(u,\xi)\|_{p,\bw}  
$$
and in the case $p=\infty$ we have
$$
C_1\|u\|_\infty \le \|S(u,\xi)\|_{\infty}  
$$
with a positive constant $C_1$, which may depend on $p$. 

{\bf A2. Weights.} Suppose that there is a positive constant $C_2=C_2(p)$ such that 
$\sum_{\nu=1}^m w_\nu \le C_2$.

Consider the following well known recovery operator (algorithm) 
$$
\ell p\bw(\xi)(f) := \ell p\bw(\xi,X_N)(f):=\text{arg}\min_{u\in X_N} \|S(f-u,\xi)\|_{p,\bw}.
$$
Note that the above algorithm $\ell p\bw(\xi)$ only uses the function values $f(\xi^\nu)$, $\nu=1,\dots,m$. In the case $p=2$ it is a linear algorithm -- orthogonal projection with respect 
to the norm $\|\cdot\|_{2,\bw}$. In the case $p\neq 2$ it is not a linear algorithm.
 Under assumptions {\bf A1} and {\bf A2}, we have

\begin{Theorem}[\cite{VT183}, \cite{DTM1}]\label{ubT2} Suppose that conditions  {\bf A1} and {\bf A2} are satisfied for some $\xi\subset\Og$ and constants $C_1, C_2>0$.  Then for any $f\in \C(\Omega)$ and  $1\le p<\infty$ we have
\begin{equation}\label{A1}
\|f-\ell p\bw(\xi, X_N)(f)\|_{L_p(\Og, \mu)} \le 2^{1/p}(2C_1^{-1}C_2^{1/p} +1)d(f, X_N)_{L_p(\Og, \mu_{\bw,\xi})},
\end{equation}
where $\mu_{\bw, \xi}$ is  the probability measure  given by 
\[ \mu_{\bw, \xi}=\f12 \mu +\f 1{2\|\bw\|_1 }\sum_{j=1}^m w_j \da_{\xi^j},\   \ \|\bw\|_1=\sum_{j=1}^m w_j.\]
Also, we have
\begin{equation}\label{A2}
\|f-\ell p\bw(\xi, X_N)(f)\|_{L_p(\Og, \mu)} \le (2C_1^{-1}C_2^{1/p} +1)d(f, X_N)_{\infty}.
\end{equation}
\end{Theorem}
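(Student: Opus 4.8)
The plan is to carry out a standard ``best approximation up to a constant'' comparison, exploiting the minimality of the recovery operator together with the one-sided discretization {\bf A1}. Write $u^\ast := \ell p\bw(\xi, X_N)(f)$ for the minimizer and let $g\in X_N$ be an arbitrary competitor. By the triangle inequality in $L_p(\Og,\mu)$,
$$\|f-u^\ast\|_{L_p(\Og,\mu)} \le \|f-g\|_{L_p(\Og,\mu)} + \|g-u^\ast\|_{L_p(\Og,\mu)}.$$
The key point is that $g-u^\ast\in X_N$, so the second term is exactly the kind of object {\bf A1} controls: $C_1\|g-u^\ast\|_{L_p(\Og,\mu)} \le \|S(g-u^\ast,\xi)\|_{p,\bw}$.

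Next I would use the minimality of $u^\ast$ to bound the discrete norm. Since $u^\ast$ minimizes $\|S(f-\cdot,\xi)\|_{p,\bw}$ over $X_N$, we have $\|S(f-u^\ast,\xi)\|_{p,\bw}\le \|S(f-g,\xi)\|_{p,\bw}$, and hence by the triangle inequality in the discrete norm
$$\|S(g-u^\ast,\xi)\|_{p,\bw} \le \|S(g-f,\xi)\|_{p,\bw} + \|S(f-u^\ast,\xi)\|_{p,\bw} \le 2\|S(f-g,\xi)\|_{p,\bw}.$$
Combining with {\bf A1} gives $\|g-u^\ast\|_{L_p(\Og,\mu)} \le 2C_1^{-1}\|S(f-g,\xi)\|_{p,\bw}$, so everything is reduced to estimating the single discrete quantity $\|S(f-g,\xi)\|_{p,\bw}$ and the term $\|f-g\|_{L_p(\Og,\mu)}$ by the right-hand sides of (\ref{A1}) and (\ref{A2}).

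For (\ref{A1}) I would read off two elementary inequalities directly from the definition $\mu_{\bw,\xi}=\f12\mu+\f1{2\|\bw\|_1}\sum_j w_j\da_{\xi^j}$: for any $h$, both $\f12\|h\|_{L_p(\mu)}^p$ and $\f1{2\|\bw\|_1}\sum_j w_j|h(\xi^j)|^p$ are bounded by $\|h\|_{L_p(\mu_{\bw,\xi})}^p$. Using {\bf A2} (so $\|\bw\|_1\le C_2$) in the second of these yields $\|S(h,\xi)\|_{p,\bw}\le (2C_2)^{1/p}\|h\|_{L_p(\mu_{\bw,\xi})}$, while the first gives $\|h\|_{L_p(\mu)}\le 2^{1/p}\|h\|_{L_p(\mu_{\bw,\xi})}$. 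Applying both with $h=f-g$, substituting into the chain above, collecting the constant $2^{1/p}(2C_1^{-1}C_2^{1/p}+1)$, and taking the infimum over $g\in X_N$ gives (\ref{A1}). For (\ref{A2}) I would instead use the cruder bounds $\|S(h,\xi)\|_{p,\bw}^p=\sum_j w_j|h(\xi^j)|^p\le C_2\|h\|_\infty^p$ (again from {\bf A2}) and $\|h\|_{L_p(\mu)}\le\|h\|_\infty$ (since $\mu$ is a probability measure), with $h=f-g$, and then pass to the infimum.

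There is no genuine analytic obstacle here; the argument is essentially bookkeeping of constants. The one point that must be handled correctly is that {\bf A1} only controls elements of $X_N$, which is why the comparison has to be routed through $g-u^\ast$ rather than applied to $f-u^\ast$ directly; and the factors $2^{1/p}$ arise solely from the $\f12$-splitting in the definition of $\mu_{\bw,\xi}$, so care is needed only in tracking them and in selecting the two different conversion inequalities appropriate to the two estimates.
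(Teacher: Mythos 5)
Your argument is correct, and the constants $2^{1/p}(2C_1^{-1}C_2^{1/p}+1)$ and $(2C_1^{-1}C_2^{1/p}+1)$ come out exactly right; this is the same standard route taken in the cited sources and echoed elsewhere in the paper (e.g.\ in the proof of Theorem \ref{ubT5}): pass through a near-best competitor $g\in X_N$, use minimality of $\ell p\bw(\xi,X_N)(f)$ in the discrete norm, and apply {\bf A1} to the difference $g-u^\ast\in X_N$. The only point left implicit is that {\bf A1} makes $\|S(\cdot,\xi)\|_{p,\bw}$ a genuine norm on $X_N$, which guarantees the minimizer $u^\ast$ exists.
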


In this paper we focus mainly on   the special weights 
$$
\bw =\bw_m:= (1/m,\dots,1/m).
$$
   In this case the algorithm 
$\ell p\bw_m(\xi,X_N)$ is the classical $\ell_p$ minimization algorithm discussed above in the introduction. For brevity we use the notations introduced above
$$
\ell p(\xi,X_N) := \ell p\bw_m(\xi,X_N),\quad \mu_\xi := \mu_{\bw_m,\xi}.
$$
  For $\bF\subset \cC(\Omega)$ denote
$$
 \sigma_v(\bF,\D_N)_{(p,m)}:= \sup_{\xi\in\Omega^m} \sigma_v(\bF,\D_N)_{L_{p} (\Omega, \mu_{\xi})}.
$$
For brevity, in the case $X=L_p(\Omega,\mu)$ we write $\sigma_v(f,\D_N)_p$.

For any system $\D_N$ we have
$$
2^{-1/p} \sigma_{v}(\bF,\D_N)_p\leq \sigma_{v}(\bF,\D_N)_{(p,m)}\leq  \sigma_v(\bF,\D_N)_\infty. 
$$

In this paper we study the following 
  recovery characteristic -- {\it the sparse recovery characteristic} -- which was introduced in \cite{DTM1} in the case $p=2$.
$$
\varrho^{\ell p}_{m,v}(\bF,\D_N,L_p) := \inf_{\xi \in\Og^m} \sup_{f\in \bF} \min_{L\in\cX_v(\D_N)} \|f-\ell p(\xi,L)(f)\|_p.
$$
Clearly, for any integer  $m\ge 1$
$$
\varrho^{\ell p}_{m,v}(\bF, \D_N,L_p) \ge \sigma_v(\bF,\D_N)_p.
$$
The quantity $\varrho^{\ell p}_{m,v}(\bF,\D_N,L_p)$ shows how close we can get to the ideal $v$-term approximation error $\sigma_v(\bF,\D_N)_p$ by using function values at $m$ points and by applying associated $\ell_p$ minimization algorithms. 

We prove  here two conditional theorems.  

 \begin{Theorem}\label{ubT3} Let $m$, $v$, $N$ be given natural numbers such that $v\le N$.  Let $\D_N\subset \C(\Og)$ be  a system of $N$ elements. Assume that  there exists a set $\xi:= \{\xi^j\}_{j=1}^m \subset \Omega $, which provides {\it one-sided $L_p$-universal discretization}  (\ref{I3}) for the collection $\cX_v(\D_N)$. Then for   any  function $ f \in \C(\Omega)$ we have
\be\label{I5}
  \|f-\ell p(\xi,\cX_v(\D_N))(f)\|_p \le 2^{1/p}(2D +1) \sigma_v(f,\D_N)_{L_p(\Og, \mu_\xi)}
 \ee
 and
 \be\label{I6}
  \|f-\ell p(\xi,\cX_v(\D_N))(f)\|_p \le  (2D +1) \sigma_v(f,\D_N)_\infty.
 \ee
 \end{Theorem}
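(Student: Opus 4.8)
The plan is to reduce the statement to a subspace-by-subspace application of Theorem \ref{ubT2}. First I would specialize that theorem to the uniform weights $\bw = \bw_m = (1/m,\dots,1/m)$. With this choice one has $\|S(\cdot,\xi)\|_{p,\bw_m} = \|S(\cdot,\xi)\|_p$, the weight sum $\sum_{\nu} w_\nu = 1$ so that assumption {\bf A2} holds with $C_2 = 1$, and the associated measure $\mu_{\bw_m,\xi}$ coincides with $\mu_\xi$. Thus the conclusions (\ref{A1}) and (\ref{A2}) of Theorem \ref{ubT2}, applied to a fixed subspace $X_N = L$, collapse to estimates governed by the single constant $2C_1^{-1}+1$.

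Next I would extract assumption {\bf A1} from the universal discretization hypothesis. Fix any $L \in \cX_v(\D_N)$. Since $L$ is one of the subspaces in the collection $\cX_v(\D_N)$, the one-sided $L_p$-universal discretization (\ref{I3}) applied to $u \in L$ reads $\|u\|_p \le D\|S(u,\xi)\|_p$, that is, $D^{-1}\|u\|_{L_p(\Og,\mu)} \le \|S(u,\xi)\|_{p,\bw_m}$. This is precisely {\bf A1} with $C_1 = D^{-1}$, and crucially the \emph{same} constant works simultaneously for every $L \in \cX_v(\D_N)$ --- this is exactly the force of the word \emph{universal}. Substituting $C_1^{-1} = D$ and $C_2 = 1$ into the specialized (\ref{A1}) and (\ref{A2}) yields, for each individual $L$,
\begin{align*}
\|f - \ell p(\xi,L)(f)\|_p &\le 2^{1/p}(2D+1)\,d(f,L)_{L_p(\Og,\mu_\xi)},\\
\|f - \ell p(\xi,L)(f)\|_p &\le (2D+1)\,d(f,L)_\infty.
\end{align*}

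Finally I would take the minimum over $L$. By the definition (\ref{Alg1}) of Algorithm 1, the chosen subspace $L(\xi,f)$ minimizes $\|f - \ell p(\xi,L)(f)\|_p$ over the finite collection $\cX_v(\D_N)$, so the left-hand side of (\ref{I5}) and (\ref{I6}) equals $\min_{L}\|f - \ell p(\xi,L)(f)\|_p$. Bounding each term by the per-subspace estimate above and pulling the constant out, it remains to identify $\min_{L\in\cX_v(\D_N)} d(f,L)_{L_p(\Og,\mu_\xi)}$ with $\sigma_v(f,\D_N)_{L_p(\Og,\mu_\xi)}$, and likewise in the $L_\infty$ norm. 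This identity is immediate from $\Sigma_v(\D_N) = \bigcup_{L\in\cX_v(\D_N)} L$, which gives $\inf_{g\in\Sigma_v(\D_N)}\|f-g\| = \min_{L} \inf_{g\in L}\|f-g\| = \min_L d(f,L)$. Combining these gives (\ref{I5}) and (\ref{I6}).

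I do not anticipate a serious obstacle: the result is essentially a repackaging of Theorem \ref{ubT2}. The one point requiring care is the logical order --- Theorem \ref{ubT2} is a statement about a single fixed subspace, so one must apply it to each $L$ separately and only afterwards take the minimum, exploiting that the universal discretization constant $D$ is common to all subspaces in the collection. The remaining work (checking $\mu_{\bw_m,\xi} = \mu_\xi$, $C_2 = 1$, and the identification of the minimum of $d(f,L)$ with $\sigma_v(f,\D_N)$) is routine bookkeeping.
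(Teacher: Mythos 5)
Your proposal is correct and follows essentially the same route as the paper: the paper likewise verifies \textbf{A1} with $C_1=D^{-1}$ and \textbf{A2} with $C_2=1$ for the uniform weights, applies Theorem \ref{ubT2} to each subspace of the collection with the common constant coming from universality, and then minimizes over the subspaces using the definition of Algorithm 1. The only cosmetic difference is that the paper states the argument for an arbitrary collection $\cX$ and then specializes to $\cX_v(\D_N)$, which changes nothing of substance.
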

 
 We now formulate a direct corollary of Theorem \ref{ubT3} for function classes. Denote by
 $\cA(m,v,k,p,D)$   the family of all collections $\cX:= \{X(n)\}_{n=1}^k$ of finite-dimensional  linear subspaces $X(n)$, $\dim X(n)=v$, of the space $\C(\Omega)$ such that for each $\cX$  there exists a set $\xi:= \{\xi^j\}_{j=1}^m \subset \Omega $, which provides {\it one-sided $L_p$-universal discretization} (\ref{I3}) for  $\cX$.

 \begin{Theorem}\label{ubT4} Let $m$, $v$, $N$ be given natural numbers such that $v\le N$. Set $k=\binom{N}{v}$. Assume that a system $\D_N$ is such that $\cX_v(\D_N) \in \cA(m,v,k,p,D)$. Then for   any compact subset $\bF$ of $\C(\Omega)$,  we have
\be\label{I7}
 \varrho_{m,v}^{\ell p}(\bF,\D_N,L_p(\Omega,\mu)) \le 2^{1/p}(2D +1) \sigma_{v}(\bF,\D_N)_{(p,m)}
 \ee
 and
 \be\label{I8}
 \varrho_{m,v}^{\ell p}(\bF,\D_N,L_p(\Omega,\mu)) \le  (2D +1) \sigma_{v}(\bF,\D_N)_\infty.
 \ee
 \end{Theorem}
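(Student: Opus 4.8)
The plan is to deduce the theorem as a direct corollary of Theorem \ref{ubT3}, passing from individual functions to the class $\bF$ by taking suprema. The hypothesis $\cX_v(\D_N)\in\cA(m,v,k,p,D)$ is used exactly once, at the start: by the definition of the family $\cA(m,v,k,p,D)$ it guarantees a set $\xi_0:=\{\xi^j\}_{j=1}^m\subset\Og$ that provides one-sided $L_p$-universal discretization (\ref{I3}) for the collection $\cX_v(\D_N)$. First I would fix one such $\xi_0$.

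For this fixed $\xi_0$, Theorem \ref{ubT3} applies to every $f\in\C(\Og)$, and in particular to every $f\in\bF$. From (\ref{I5}) and (\ref{I6}) I get, for each $f\in\bF$,
$$
\|f-\ell p(\xi_0,\cX_v(\D_N))(f)\|_p \le 2^{1/p}(2D+1)\,\sigma_v(f,\D_N)_{L_p(\Og,\mu_{\xi_0})}
$$
and the same bound with right-hand side $(2D+1)\,\sigma_v(f,\D_N)_\infty$. Taking the supremum over $f\in\bF$ and recalling $\sigma_v(\bF,\D_N)_X=\sup_{f\in\bF}\sigma_v(f,\D_N)_X$ converts these into
$$
\sup_{f\in\bF}\|f-\ell p(\xi_0,\cX_v(\D_N))(f)\|_p \le 2^{1/p}(2D+1)\,\sigma_v(\bF,\D_N)_{L_p(\Og,\mu_{\xi_0})},
$$
and, respectively, into the analogous bound with $(2D+1)\,\sigma_v(\bF,\D_N)_\infty$ on the right.

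Finally I would invoke the definition of the sparse recovery characteristic as an infimum over point sets. By Algorithm 1 we have $\min_{L\in\cX_v(\D_N)}\|f-\ell p(\xi,L)(f)\|_p=\|f-\ell p(\xi,\cX_v(\D_N))(f)\|_p$, so $\xi=\xi_0$ is one competitor in the infimum defining $\varrho_{m,v}^{\ell p}$; hence
$$
\varrho_{m,v}^{\ell p}(\bF,\D_N,L_p(\Og,\mu)) \le \sup_{f\in\bF}\|f-\ell p(\xi_0,\cX_v(\D_N))(f)\|_p.
$$
Combining this with the previous displays gives (\ref{I8}) at once, and gives (\ref{I7}) after the trivial majorization $\sigma_v(\bF,\D_N)_{L_p(\Og,\mu_{\xi_0})}\le\sup_{\xi\in\Og^m}\sigma_v(\bF,\D_N)_{L_p(\Og,\mu_\xi)}=\sigma_v(\bF,\D_N)_{(p,m)}$. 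Since all the quantitative content is carried by Theorem \ref{ubT3}, I do not expect a genuine obstacle here; the only care needed is the ordering of $\inf_\xi$ and $\sup_f$ and correctly matching the averaged quantities $\sigma_v(\bF,\D_N)_{(p,m)}$ and $\sigma_v(\bF,\D_N)_\infty$ to inequalities (\ref{I5}) and (\ref{I6}).
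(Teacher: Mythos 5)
Your argument is correct and coincides with the paper's: Theorem \ref{ubT4} is stated there as a direct corollary of Theorem \ref{ubT3}, obtained exactly as you do by fixing one set $\xi_0$ provided by the hypothesis $\cX_v(\D_N)\in\cA(m,v,k,p,D)$, taking the supremum over $f\in\bF$ in (\ref{I5}) and (\ref{I6}), and using $\xi_0$ as a competitor in the infimum defining $\varrho_{m,v}^{\ell p}$. The majorization $\sigma_v(\bF,\D_N)_{L_p(\Og,\mu_{\xi_0})}\le\sigma_v(\bF,\D_N)_{(p,m)}$ is exactly how the paper's quantity $\sigma_v(\bF,\D_N)_{(p,m)}$ is meant to enter, so nothing further is needed.
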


We now proceed to the proof of Theorem \ref{ubT3}.

\begin{proof}[Proof of Theorem \ref{ubT3}] We prove a more general statement than Theorem \ref{ubT3}.
We define a new algorithm for a collection $\cX:= \{X(n)\}_{n=1}^k$, which is Algorithm 1 in the case $\cX=\cX_v(\D_N)$:
$$
n(\xi,f) := \text{arg}\min_{ 1\le n\le k}\|f-\ell p(\xi,X(n))(f)\|_p,
$$
\be\label{I4}
  \ell p(\xi,\cX)(f):= \ell p(\xi,X(n(\xi,f)))(f).
\ee

Suppose that   a set $\xi:= \{\xi^j\}_{j=1}^m \subset \Omega $ provides one-sided $L_p$-universal discretization (\ref{I3}) for the collection $\cX$. Then condition {\bf A1} is satisfied for all $X(n)$ from the collection $\cX$ with $C_1=D^{-1}$ and  $\bw=\bw_m$. Clearly, condition {\bf A2} is satisfied with $C_2=1$. Thus, we can apply Theorem \ref{ubT2} for each 
subspace $X(n)$ with the same set of points $\xi$. It gives for all $n=1,\dots,k$, $k=\binom{N}{v}$,
\be\label{A3}
\|f-\ell p(\xi,X(n))(f)\|_p \le 2^{1/p}(2D +1)d(f, X(n))_{L_p(\Og, \mu_\xi)},
\ee
where as above 
$$
\mu_\xi=\mu_{\bw_m, \xi}=\f12 \mu+\f 1{2m}\sum_{j=1}^m\da_{\xi^j}.
$$
 Then, inequality (\ref{A3}) and the definition (\ref{I4}) imply
\be\label{A4}
 \|f-\ell p(\xi,\cX)(f)\|_p \le  2^{1/p}(2D +1)\min_{1\le n\le k}d(f, X(n))_{L_p(\Og, \mu_\xi)}.
 \ee
 This proves inequality (\ref{I5}) of Theorem \ref{ubT3} if we take $\cX = \cX_{v}(\D_N)$. Inequality (\ref{I6}) follows from (\ref{A2}).  \end{proof}

Along with the algorithm $\ell p(\xi,\cX_v(\D_N))$ consider a version of it, which only uses the function 
values at points $\xi^1,\dots,\xi^m$, namely, Algorithm 2 defined in the Introduction. For brevity denote $L_p(\xi) := L_p(\Omega_m,\mu_m)$, where $\Omega_m=\{\xi^\nu\}_{\nu=1}^m$  and  $\mu_m(\xi^\nu) =1/m$, $\nu=1,\dots,m$. Let 
$B_v(f,\D_N,L_p(\xi))$ denote the best $v$-term approximation of $f$ in the $L_p(\xi)$ norm with 
respect to the system $\D_N$. Note that $B_v(f,\D_N,L_p(\xi))$ may not be unique. Obviously,
\be\label{ub15}
\|f-B_v(f,\D_N,L_p(\xi))\|_{L_p(\xi)} = \sigma_v(f,\D_N)_{L_p(\xi)}.
\ee

We prove the following analog of Theorem \ref{ubT3} for this algorithm.

 \begin{Theorem}\label{ubT5} Let $m$, $v$, $N$ be given natural numbers such that $2v\le N$.  Let $\D_N\subset \C(\Og)$ be  a system of $N$ elements. Assume that  there exists a set $\xi:= \{\xi^j\}_{j=1}^m \subset \Omega $, which provides {\it one-sided $L_p$-universal discretization}  (\ref{I3}) for the collection $\cX_{2v}(\D_N)$. Then for   any  function $ f \in \C(\Omega)$ we have
\be\label{ub16}
  \|f-B_v(f,\D_N,L_p(\xi))\|_p \le 2^{1/p}(2D +1) \sigma_v(f,\D_N)_{L_p(\Og, \mu_\xi)}
 \ee
 and
 \be\label{ub17}
  \|f-B_v(f,\D_N,L_p(\xi))\|_p \le  (2D +1) \sigma_v(f,\D_N)_\infty.
 \ee
 \end{Theorem}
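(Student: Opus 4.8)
The plan is to mimic the proof of Theorem \ref{ubT3}, with one essential modification accounting for the fact that Algorithm 2 minimizes the \emph{discrete} norm $\|\cdot\|_{L_p(\xi)}$ rather than the continuous one. Write $g := B_v(f,\D_N,L_p(\xi))$ for the discrete best $v$-term approximant, and let $h \in \Sigma_v(\D_N)$ be a near-best $v$-term approximant to $f$ in the relevant norm: in $L_p(\Og,\mu_\xi)$ when proving \eqref{ub16}, and in $L_\infty$ when proving \eqref{ub17}. The starting point is the triangle inequality $\|f-g\|_p \le \|f-h\|_p + \|h-g\|_p$, and the task reduces to controlling the second term.

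The crucial observation is that, although $g$ and $h$ are each $v$-term approximants, their difference $g-h$ is supported on the union of the two underlying index sets and hence lies in $\Sigma_{2v}(\D_N)$. This is precisely why the hypothesis demands one-sided $L_p$-universal discretization for the collection $\cX_{2v}(\D_N)$ rather than $\cX_v(\D_N)$. Applying \eqref{I3} to $g-h$ gives $\|h-g\|_p \le D\|h-g\|_{L_p(\xi)}$, transferring the second term to the discrete norm where the minimality of $g$ can be exploited.

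Next I would use that minimality: by \eqref{ub15}, $\|f-g\|_{L_p(\xi)} = \sigma_v(f,\D_N)_{L_p(\xi)} \le \|f-h\|_{L_p(\xi)}$, so a further triangle inequality yields $\|h-g\|_{L_p(\xi)} \le \|h-f\|_{L_p(\xi)} + \|f-g\|_{L_p(\xi)} \le 2\|h-f\|_{L_p(\xi)}$. Combining the three steps gives $\|f-g\|_p \le \|f-h\|_p + 2D\|f-h\|_{L_p(\xi)}$. For \eqref{ub16} I would then pass through the measure $\mu_\xi = \tfrac12\mu + \tfrac12\mu_m$: since $\|F\|_{L_p(\mu)}^p$ and $\|F\|_{L_p(\mu_m)}^p$ are each at most $2\|F\|_{L_p(\mu_\xi)}^p$, both $\|f-h\|_p$ and $\|f-h\|_{L_p(\xi)}$ are bounded by $2^{1/p}\|f-h\|_{L_p(\mu_\xi)}$, producing the factor $2^{1/p}(2D+1)$ after letting $h$ approach the best approximant in $L_p(\mu_\xi)$. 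For \eqref{ub17} I would instead take $h$ near-best in $L_\infty$ and use the trivial bounds $\|f-h\|_p \le \|f-h\|_\infty$ and $\|f-h\|_{L_p(\xi)} \le \|f-h\|_\infty$, which yield the factor $(2D+1)$.

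The only point genuinely requiring care — the analogue of the main step in Theorem \ref{ubT3} — is the $\Sigma_{2v}(\D_N)$ membership of $g-h$; once that is recognized, the universal discretization hypothesis does all the substantive work, and the remainder is routine bookkeeping with the triangle inequality together with the elementary comparison of the norms induced by $\mu$, $\mu_m$, and $\mu_\xi$.
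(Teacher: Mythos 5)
Your proposal is correct and follows essentially the same route as the paper: exploit the identity \eqref{ub15} and the minimality of $B_v(f,\D_N,L_p(\xi))$ in the discrete norm, note that the difference of two $v$-term approximants lies in $\Sigma_{2v}(\D_N)$ so that the one-sided universal discretization for $\cX_{2v}(\D_N)$ transfers the discrete bound to $L_p(\Og,\mu)$, and finish with the triangle inequality and the elementary comparisons between the norms induced by $\mu$, $\mu_m$, and $\mu_\xi$. The only cosmetic difference is that you work with near-best approximants and pass to a limit, while the paper takes the exact best $L_\infty$ approximant $h=B_v(f,\D_N,L_\infty)$ directly; both yield the stated constants.
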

 \begin{proof} We derive (\ref{ub16}) and (\ref{ub17}) from (\ref{ub15}).  We begin with (\ref{ub17}).
 Clearly, 
$$
\sigma_v(f,\D_N)_{L_p(\xi)} \le \sigma_v(f,\D_N )_\infty.
$$
For brevity denote $u:=B_v(f,\D_N,L_p(\xi))$ and $h:=B_v(f,\D_N,L_\infty)$.   Then (\ref{ub15}) implies 
$$
\|h - u\|_{L_p(\xi)} \le \|f-h\|_{L_p(\xi)} +\|f-u\|_{L_p(\xi)} 
 \le 2\sigma_v(f,\D_N)_\infty.
$$
Using that $h - u \in \Sigma_{2v}(\D_N)$, by discretization (\ref{I3}) we 
conclude that
\be\label{ub18}
\|h - u\|_{L_p(\Omega,\mu)} \le 2D \sigma_v(f,\D_N)_\infty.
\ee
Finally,
$$
\|f-u\|_{L_p(\Omega,\mu)} \le \|f-h\|_{L_p(\Omega,\mu)} + \|h - u\|_{L_p(\Omega,\mu)}.
$$
This and (\ref{ub18}) prove (\ref{ub17}).

The proof of (\ref{ub16}) repeats the above argument and uses the following inequalities
$$
\|g\|_{L_p(\Og, \mu)}	\le 2^{1/p} \|g\|_{L_p(\Og,\mu_{\xi})},
$$
$$
\|g\|_{L_p(\Omega_m,\mu_m)}=\|g\|_{L_p(\xi)} \le 2^{1/p}\|g\|_{L_p(\Omega,\mu_\xi)}.
$$
 
 \end{proof}

   We now formulate a direct corollary of Theorem \ref{ubT5} for function classes.

 \begin{Theorem}\label{ubT6} Let $m$, $v$, $N$ be given natural numbers such that $2v\le N$. Set $k=\binom{N}{2v}$. Let $1\le p<\infty$. Assume that a system $\D_N$ is such that $\cX_{2v}(\D_N) \in \cA(m,2v,k,p,D)$. Then for   any compact subset $\bF$ of $\C(\Omega)$,  we have
\be\label{ub19}
 \varrho_{m}^{o}(\bF,L_p(\Omega,\mu)) \le 2^{1/p}(2D +1) \sigma_{v}(\bF,\D_N)_{(p,m)}
 \ee
 and
 \be\label{ub20}
 \varrho_{m}^{o}(\bF,L_p(\Omega,\mu)) \le  (2D +1) \sigma_{v}(\bF,\D_N)_\infty.
 \ee
 \end{Theorem}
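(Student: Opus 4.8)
The plan is to derive Theorem~\ref{ubT6} as a direct consequence of Theorem~\ref{ubT5}, exactly as the surrounding text advertises (``a direct corollary''). The key observation is that the optimal recovery number $\varrho_m^o(\bF,L_p)$ is defined as an infimum over \emph{all} point sets $\xi$ of $m$ points and \emph{all} reconstruction maps $\Psi:\bbC^m\to L_p(\Omega,\mu)$, so to produce an upper bound it suffices to exhibit one good choice of $\xi$ together with one explicit recovery map. The natural candidate for $\Psi$ is the map that sends the sample vector $S(f,\xi)$ to the best $v$-term approximant $B_v(f,\D_N,L_p(\xi))$; this is legitimate precisely because, as noted in the Introduction, $B_v(f,\D_N,L_p(\xi))$ depends on $f$ only through the sample vector $S(f,\xi)$ and the restriction $\D_N(\Omega_m)$. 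So this nonlinear reconstruction operator is an admissible competitor in the infimum defining $\varrho_m^o$.

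The steps I would carry out are as follows. First, fix an arbitrary collection in the family $\cA(m,2v,k,p,D)$: since the hypothesis is $\cX_{2v}(\D_N)\in\cA(m,2v,k,p,D)$, by the definition of $\cA$ there exists a set $\xi=\{\xi^j\}_{j=1}^m\subset\Omega$ of $m$ points providing one-sided $L_p$-universal discretization \eqref{I3} for the collection $\cX_{2v}(\D_N)$ with constant $D$. Second, I apply Theorem~\ref{ubT5} with this very $\xi$: since $2v\le N$ and the discretization hypothesis holds for $\cX_{2v}(\D_N)$, inequalities \eqref{ub16} and \eqref{ub17} are available for every $f\in\cC(\Omega)$. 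Third, I take the supremum over $f\in\bF$. Using the recovery map $\Psi$ defined above and the definition of $\varrho_m^o$, passing to the supremum in \eqref{ub17} gives
$$
\varrho_m^o(\bF,L_p(\Omega,\mu)) \le \sup_{f\in\bF}\|f-B_v(f,\D_N,L_p(\xi))\|_p \le (2D+1)\sup_{f\in\bF}\sigma_v(f,\D_N)_\infty = (2D+1)\sigma_v(\bF,\D_N)_\infty,
$$
which is \eqref{ub20}. Similarly, passing to the supremum in \eqref{ub16} and recalling the definition $\sigma_v(\bF,\D_N)_{(p,m)}=\sup_{\xi\in\Omega^m}\sigma_v(\bF,\D_N)_{L_p(\Omega,\mu_\xi)}$, which dominates the single term $\sup_{f\in\bF}\sigma_v(f,\D_N)_{L_p(\Omega,\mu_\xi)}$ for our particular $\xi$, yields \eqref{ub19}.

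There is no serious obstacle here; the content is entirely in Theorem~\ref{ubT5} and the only things to check are bookkeeping matters. The one point requiring a moment of care is the passage from the pointwise-in-$f$ bound of Theorem~\ref{ubT5} to the class statement: I must confirm that the reconstruction operator $f\mapsto B_v(f,\D_N,L_p(\xi))$ genuinely qualifies as an admissible $\Psi$ in the definition of $\varrho_m^o$, i.e.\ that it factors through the sample vector $S(f,\xi)\in\bbC^m$ and does not secretly use more information about $f$. This was already established in the Introduction, where $B_v(f,\D_N,L_p(\xi))=\ell p^s(\xi,\cX_v(\D_N))(f)$ was defined as the best $v$-term approximant in the discrete space $L_p(\xi)$, a quantity depending only on the samples. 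The other minor point is the direction of the $\mu_\xi$ estimate: since $\sigma_v(\bF,\D_N)_{(p,m)}$ takes a supremum over all $\xi\in\Omega^m$, our fixed discretization set is one admissible choice, so the single-$\xi$ bound is dominated by $\sigma_v(\bF,\D_N)_{(p,m)}$ and \eqref{ub19} follows.
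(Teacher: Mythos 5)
Your proposal is correct and follows exactly the route the paper intends: the paper offers no separate proof, calling Theorem~\ref{ubT6} a direct corollary of Theorem~\ref{ubT5}, and your argument — choose the discretization set $\xi$ guaranteed by the hypothesis $\cX_{2v}(\D_N)\in\cA(m,2v,k,p,D)$, use $f\mapsto B_v(f,\D_N,L_p(\xi))$ as an admissible reconstruction map factoring through $S(f,\xi)$, apply \eqref{ub16} and \eqref{ub17} pointwise, and pass to the supremum over $f\in\bF$ while dominating the single-$\xi$ term by $\sigma_v(\bF,\D_N)_{(p,m)}$ — is precisely that deduction. The bookkeeping points you flag (admissibility of $\Psi$ and the direction of the $\mu_\xi$ estimate) are the right ones and are handled correctly.
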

 
 The following Theorem \ref{ubT7} is a direct corollary of 
Theorems \ref{ubT6} and \ref{IT2}.

\begin{Theorem}\label{ubT7} Assume that $\D_N$ is a uniformly bounded Riesz system  satisfying (\ref{I1}) and \eqref{Riesz} for some constants $0<R_1\leq R_2<\infty$.
Let $2<p<\infty$ and let  $1\leq v\leq N/2$ be an integer. There exist an absolute constant $C$ and a constant $C(p,R_1,R_2)$
such that for any compact $\bF\subset \cC(\Omega)$ we have for $m\ge C(p,R_1,R_2)v^{p/2} (\log N)^2$
\be\label{ub21}
\varrho_m^o(\bF,L_p) \le  C\sigma_{v}(\bF,\D_N)_{(p,m)} \le C\sigma_v(\bF,\D_N)_\infty.  
\ee
 \end{Theorem}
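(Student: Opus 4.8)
The plan is to treat this as what it is called in the text, a direct corollary: Theorem \ref{IT2} supplies good points for universal discretization, and Theorem \ref{ubT6} converts one-sided $L_p$-universal discretization for the collection $\cX_{2v}(\D_N)$ into the desired bound on $\varrho_m^o$. Note at the outset that the second inequality in (\ref{ub21}), namely $\sigma_v(\bF,\D_N)_{(p,m)} \le \sigma_v(\bF,\D_N)_\infty$, is nothing but the general chain inequality recorded just before the definition of $\varrho^{\ell p}_{m,v}$, so all the work goes into the first inequality $\varrho_m^o(\bF,L_p) \le C\,\sigma_v(\bF,\D_N)_{(p,m)}$.

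First I would apply Theorem \ref{IT2} with the integer $2v$ in place of $v$, which is legitimate since $v\le N/2$ forces $2v\le N$, and with the fixed choice $\va=1/2$. This produces a cardinality $m_0\le C(p,R_1,R_2)2^{7}(2v)^{p/2}(\log N)^2=C_1(p,R_1,R_2)v^{p/2}(\log N)^2$ and points $\eta^1,\dots,\eta^{m_0}\in\Og$ with $\tfrac12\|f\|_p^p\le \tfrac1{m_0}\sum_{i=1}^{m_0}|f(\eta^i)|^p$ for every $f\in\Sigma_{2v}(\D_N)=\bigcup_{V\in\cX_{2v}(\D_N)}V$. The lower estimate is precisely the one-sided $L_p$-universal discretization (\ref{I3}) for $\cX_{2v}(\D_N)$ with the absolute constant $D=2^{1/p}\le\sqrt2$. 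To obtain the statement for an arbitrary $m$ above the threshold rather than only for $m_0$, I would upgrade this to an $m$-point configuration while keeping the discretization constant absolute: writing $m=qm_0+s$ with $q\ge1$ and $0\le s<m_0$, take each $\eta^i$ with multiplicity $q$ together with $s$ extra copies of $\eta^1,\dots,\eta^s$; discarding the surplus terms in the normalized sum and using $qm_0/m\ge q/(q+1)\ge 1/2$ shows this $m$-point set still satisfies (\ref{I3}) for $\cX_{2v}(\D_N)$ with constant $D'=2^{1/p}D\le 2$. Hence $\cX_{2v}(\D_N)\in\cA(m,2v,\binom{N}{2v},p,D')$ for every $m\ge m_0$.

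Finally, choosing the threshold constant so that $m\ge C(p,R_1,R_2)v^{p/2}(\log N)^2$ forces $m\ge m_0$, I would invoke Theorem \ref{ubT6} at this very $m$ with constant $D'$. Its inequality (\ref{ub19}) yields $\varrho_m^o(\bF,L_p)\le 2^{1/p}(2D'+1)\sigma_v(\bF,\D_N)_{(p,m)}$, and since $D'\le 2$ the prefactor $2^{1/p}(2D'+1)$ is bounded by an absolute constant $C$, which is exactly the first inequality in (\ref{ub21}); the second inequality then finishes the chain. I expect the main obstacle to be purely bookkeeping rather than conceptual: Theorem \ref{IT2} fixes one particular cardinality $m_0$, whereas both Theorem \ref{ubT6} and the characteristic $\sigma_v(\bF,\D_N)_{(p,m)}$ must be read at the same $m$ that appears in $\varrho_m^o$, so the repetition argument of the previous paragraph is what reconciles the two indices while keeping the constant independent of $m$ and $N$. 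One should also check that passing from $v$ to $2v$ (needed because Theorem \ref{ubT6} requires discretization for $\cX_{2v}(\D_N)$) only inflates the sample budget by the harmless factor $2^{p/2}$, which is absorbed into $C(p,R_1,R_2)$.
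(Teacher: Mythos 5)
Your proposal is correct and follows exactly the route the paper intends: the paper gives no written proof, merely declaring Theorem \ref{ubT7} a direct corollary of Theorems \ref{IT2} and \ref{ubT6}, and your argument is precisely that deduction with the constants tracked. The one detail you add beyond the paper --- the point-repetition argument reconciling the cardinality $m_0$ produced by Theorem \ref{IT2} with the arbitrary $m$ appearing in both $\varrho_m^o$ and $\sigma_v(\bF,\D_N)_{(p,m)}$ --- is a genuine (if routine) gap in the ``direct corollary'' claim, and your way of closing it while keeping $D'\le 2$ absolute is sound.
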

 
Theorem \ref{ubT7} gives a slightly better bound on $m$ than Theorem \ref{IT3}. However, the bound 
(\ref{or}) in Theorem \ref{IT3} is provided by a simple greedy algorithm, which performs about 
$(v\ln(2v))^{p/2}(\ln N)^2$ iterations of the $\ell_p$ projections on the $u$-dimensional subspaces with 
$u$ not exceeding $v\ln (2v)$ (in the sense of order). 
At each iteration that greedy algorithm searches over at most $N$ dictionary elements for choosing a new one. On the other hand, the algorithm $B_v(\cdot,\D_N,L_p(\xi))$ performs  $\binom{N}{v}$
  iterations of the $\ell_p$ projections on the $v$-dimensional subspaces. 
  
  We now discuss the case $1\le p\le 2$. In this case instead of Theorem \ref{IT2} we use the following 
  recent result from \cite{DTM2}. 
  
   \begin{Theorem}\label{ubT8} Let $1\le p\le 2$. Assume that $ \D_N=\{\ff_j\}_{j=1}^N\subset L_\infty(\Og)$ is a  system  satisfying  the conditions  \eqref{I1} and   \eqref{Bessel} for some constant $K\ge 1$. Let $\xi^1,\cdots, \xi^m$ be independent 
 	random points on $\Og$  that are  identically distributed  according to  $\mu$. 
 	 Then there exist constants  $C=C(p)>1$ and $c=c(p)>0$ such that 
 	  given any   integers  $1\leq v\leq N$ and 
 	 $$
 	 m \ge  C Kv \log N\cdot (\log(2Kv ))^2\cdot (\log (2Kv )+\log\log N),
 	 $$
 	 the inequalities 
 	 \begin{equation}\label{ub22}
 	 \frac{1}{2}\|f\|_p^p \le \frac{1}{m}\sum_{j=1}^m |f(\xi^j)|^p \le \frac{3}{2}\|f\|_p^p,\   \   \ \forall f\in  \Sigma_v(\D_N)
 	 \end{equation}
 hold with probability $\ge 1-2 \exp\Bl( -\f {cm}{Kv\log^2 (2Kv)}\Br)$.
\end{Theorem}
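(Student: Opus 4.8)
The plan is to read the two-sided inequality as a single statement about a supremum of an empirical process and to control that supremum by chaining. Since both $\|f\|_p^p$ and $\frac1m\sum_{j=1}^m|f(\xi^j)|^p$ are homogeneous of degree $p$ in $f$, it suffices to bound
$$
E:=\sup_{\substack{f\in\Sigma_v(\D_N)\\ \|f\|_p=1}}\Bigl|\frac1m\sum_{j=1}^m|f(\xi^j)|^p-\|f\|_p^p\Bigr|,
$$
because the event $\{E\le 1/2\}$ is exactly \eqref{ub22}. Thus the goal becomes: show $\bE[E]\le 1/4$ once $m$ exceeds the stated threshold, and then convert this into the claimed tail bound for $\{E>1/2\}$.

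First I would record a Nikolskii-type inequality for the sparse class. For $f=\sum_{i\in T}a_i\ff_i$ with $|T|=v$, the bound \eqref{I1} gives $\|f\|_\infty\le\sum_{i\in T}|a_i|\le\sqrt v\,(\sum_{i\in T}|a_i|^2)^{1/2}$, and then \eqref{Bessel} yields $\|f\|_\infty\le\sqrt{Kv}\,\|f\|_2$. Combining this with the interpolation inequality $\|f\|_2\le\|f\|_p^{p/2}\|f\|_\infty^{1-p/2}$ (valid for $p\le2$ on a probability space) and solving the resulting relation gives
$$
\|f\|_\infty\le (Kv)^{1/p}\|f\|_p,\qquad f\in\Sigma_v(\D_N).
$$
Consequently, for normalized $f$ the summands $Z_j:=|f(\xi^j)|^p$ satisfy $0\le Z_j\le Kv$ and $\bE Z_j=1$, while a second interpolation $\|f\|_{2p}\le(Kv)^{1/(2p)}\|f\|_p$ gives $\operatorname{Var}(Z_j)\le\bE[Z_j^2]=\|f\|_{2p}^{2p}\le Kv$. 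These are the envelope and variance data that feed the chaining.

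Next I would estimate $\bE[E]$. After Rademacher symmetrization, $\bE[E]$ is controlled by the expected supremum of the symmetrized process indexed by the $L_p$-unit sphere of $\Sigma_v(\D_N)$. The metric entropy splits into a combinatorial and a volumetric part: the $\epsilon$-covering number of $\{f\in\Sigma_v(\D_N):\|f\|_p\le1\}$ obeys $\log\mathcal N(\epsilon)\lesssim v\log(eN/v)+v\log(C/\epsilon)$, the first term counting the $\binom Nv$ supports and the second being the volumetric estimate inside a fixed $v$-dimensional subspace. Because the summands are bounded by $Kv$ and have variance $O(Kv)$, a Bernstein-type (two-term) chaining bound — equivalently the suprema-of-chaos/empirical-process machinery — produces
$$
\bE[E]\ \lesssim\ \sqrt{\frac{Kv\log N\cdot\mathrm{polylog}(Kv,N)}{m}}+\frac{Kv\log N\cdot\mathrm{polylog}(Kv,N)}{m},
$$
whose right-hand side drops below $1/4$ precisely when $m$ exceeds $CKv\log N\,(\log 2Kv)^2(\log 2Kv+\log\log N)$; the factors $(\log2Kv)^2$ and $\log\log N$ are the cost of the entropy integral together with the sub-exponential (envelope $Kv$) contribution.

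Finally, to pass from $\bE[E]\le1/4$ to high probability I would invoke a concentration inequality for the supremum of an empirical process (Talagrand/Bousquet, or Adamczak's version for unbounded summands) with envelope $Kv$ and variance $Kv$; taking the deviation $t=1/4$ gives $\bP(E>1/2)\le\bP(E\ge\bE[E]+1/4)\le 2\exp(-cm/(Kv\log^2(2Kv)))$, which is the claimed tail. The main obstacle is the chaining step: a naive union bound over the $\binom Nv$ subspaces, or a single $\epsilon$-net on one subspace, only yields the quadratic threshold $m\gtrsim Kv^2\log N$, since each individual function carries variance of order $Kv$. Squeezing out the extra factor of $v$ to reach the linear-in-$v$ bound requires the refined two-term chaining that correctly separates the sub-Gaussian regime (governed by $\int\sqrt{\log\mathcal N}\,d\epsilon$) from the sub-exponential regime (governed by the envelope together with $\sup\log\mathcal N$), and it is the careful integration of these two contributions over the split entropy that is the technical heart of the argument.
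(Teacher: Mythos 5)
First, a point of reference: the paper does not prove Theorem \ref{ubT8} at all --- it is quoted as a known result from \cite{DTM2} --- so there is no in-paper proof to compare against; I am comparing your sketch with the argument in that cited reference. Your opening and closing moves are correct and coincide with that argument: the homogeneity reduction to the event $\{E\le 1/2\}$, the Nikolskii-type inequality $\|f\|_\infty\le(Kv)^{1/p}\|f\|_p$ on $\Sigma_v(\D_N)$ derived from \eqref{I1}, \eqref{Bessel} and interpolation, the resulting envelope and variance bounds $0\le Z_j\le Kv$, $\bE Z_j^2\le Kv$, and the final passage from $\bE[E]\le 1/4$ to the stated tail via Talagrand/Bousquet concentration (which with envelope $Kv$ and weak variance $Kv$ gives a bound at least as strong as the one claimed).

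The gap is the middle step, and it is the entire content of the theorem. You assert that a two-term Bernstein-type chaining bound over the split entropy $\log\mathcal N(\epsilon)\lesssim v\log(eN/v)+v\log(C/\epsilon)$ yields $\bE[E]\lesssim\sqrt{Kv\log N\cdot\mathrm{polylog}/m}+Kv\log N\cdot\mathrm{polylog}/m$. It does not. Mixed-tail chaining gives $\bE[E]\lesssim\gamma_2/\sqrt m+\gamma_1/m$, and for this index class the sub-exponential functional $\gamma_1$ in the sup metric is of order $(\text{envelope})\times(\text{top-scale entropy})\approx Kv\cdot v\log N$, since the entropy stays at level $\asymp v\log N$ all the way up to the diameter scale $\asymp Kv$. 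The sub-exponential branch therefore contributes $Kv^2\log N/m$ --- precisely the quadratic-in-$v$ threshold you say you are avoiding --- and no amount of ``correctly separating'' the sub-Gaussian and sub-exponential regimes removes that factor. What removes it in \cite{DTM2} (following the Bourgain--Lindenstrauss--Milman/Rudelson scheme underlying all known linear-in-dimension $L_p$-discretization results for $1\le p\le 2$) is a self-bounding iteration: after symmetrization and a contraction step one bounds the Rademacher process by a chaining functional of the \emph{random empirical} metric times $\sup_f\bigl(\frac1m\sum_{j}|f(\xi^j)|^p\bigr)^{1/2}$, takes expectations, and solves the resulting inequality of the form $\bE[E]\lesssim a(1+\bE[E])^{1/2}$. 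Your sketch contains no such step --- you explicitly defer it as ``the technical heart'' --- so as written the argument only establishes the theorem under the stronger hypothesis $m\gtrsim Kv^2\log N\cdot\mathrm{polylog}$, not the stated linear-in-$v$ threshold.
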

  
 The following Theorem \ref{ubT9} is a direct corollary of 
Theorems \ref{ubT6} and \ref{ubT8}.

\begin{Theorem}\label{ubT9} Let $1\le p\le 2$. Assume that $ \D_N=\{\ff_j\}_{j=1}^N\subset L_\infty(\Og)$ is a  system  satisfying  the conditions  \eqref{I1} and   \eqref{Bessel} for some constant $K\ge 1$. Let  $1\leq v\leq N/2$ be an integer. There exist an absolute constant $C$ and a constant $C(p,K)$
such that for any compact $\bF\subset \cC(\Omega)$ we have for 
$$
 m \ge  C Kv \log N\cdot (\log(2Kv ))^2\cdot (\log (2Kv )+\log\log N)
 $$
 that
\be\label{ub23}
\varrho_m^o(\bF,L_p) \le  C\sigma_{v}(\bF,\D_N)_{(p,m)} \le C\sigma_v(\bF,\D_N)_\infty.  
\ee
 \end{Theorem}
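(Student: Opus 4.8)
The plan is to read Theorem \ref{ubT9} exactly as advertised: a direct corollary obtained by feeding the random points supplied by Theorem \ref{ubT8} into the deterministic recovery bound of Theorem \ref{ubT6}. The only thing Theorem \ref{ubT6} needs is membership $\cX_{2v}(\D_N)\in\cA(m,2v,k,p,D)$ with $k=\binom{N}{2v}$, i.e.\ the \emph{existence} of a single point set $\xi$ providing one-sided $L_p$-universal discretization (\ref{I3}) for the collection $\cX_{2v}(\D_N)$. So the whole task reduces to producing such a $\xi$ for the stated range of $m$, and Theorem \ref{ubT8} is designed to do exactly that (with a better, i.e.\ logarithmic rather than power, dependence on $v$, which is the point of treating the case $1\le p\le 2$ separately).

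Concretely, first I would apply Theorem \ref{ubT8} with the integer $2v$ in place of $v$; this is legitimate since $v\le N/2$ gives $2v\le N$. That theorem then asserts, with probability $\ge 1-2\exp\!\big(-cm/(2Kv\log^2(4Kv))\big)$, the two-sided bound $\tfrac12\|f\|_p^p\le \tfrac1m\sum_{j=1}^m|f(\xi^j)|^p\le\tfrac32\|f\|_p^p$ for all $f\in\Sigma_{2v}(\D_N)$. For the $m$ in our hypothesis this probability is strictly positive, so a deterministic point set $\xi=\{\xi^1,\dots,\xi^m\}$ realizing the event exists. Discarding the upper estimate and rearranging the lower one gives $\|f\|_p\le 2^{1/p}\big(\tfrac1m\sum_{j=1}^m|f(\xi^j)|^p\big)^{1/p}$ for every $f\in\Sigma_{2v}(\D_N)=\bigcup_{V\in\cX_{2v}(\D_N)}V$, which is precisely (\ref{I3}) with constant $D=2^{1/p}$. (Here I would note in passing that (\ref{Bessel}) forces the $\ff_j$ to be linearly independent, so each $V_J$ with $|J|=2v$ genuinely has dimension $2v$, matching the dimension requirement in the definition of $\cA$.) This certifies $\cX_{2v}(\D_N)\in\cA(m,2v,k,p,D)$ with $D=2^{1/p}$, and then Theorem \ref{ubT6} immediately yields $\varrho_m^o(\bF,L_p)\le 2^{1/p}(2D+1)\sigma_v(\bF,\D_N)_{(p,m)}$ and $\varrho_m^o(\bF,L_p)\le(2D+1)\sigma_v(\bF,\D_N)_\infty$. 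Since $p\ge1$ forces $2^{1/p}\le2$, the constants $2^{1/p}(2D+1)$ and $2D+1$ are bounded by the absolute constant $5$; combining with the trivial inequality $\sigma_v(\bF,\D_N)_{(p,m)}\le\sigma_v(\bF,\D_N)_\infty$ recorded earlier gives the chain (\ref{ub23}) with an absolute $C$.

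The remaining work is pure bookkeeping on the threshold for $m$, and this is where the only (very mild) care is needed. Substituting $2v$ into the bound of Theorem \ref{ubT8} produces the requirement $m\ge CK(2v)\log N\,(\log(4Kv))^2(\log(4Kv)+\log\log N)$, whereas the hypothesis of Theorem \ref{ubT9} is stated with $\log(2Kv)$ and without the factor $2$. Using $\log(4Kv)=\log(2Kv)+\log2\le 2\log(2Kv)$ (valid since $2Kv\ge2$) and $2\log(2Kv)+\log\log N\le 2(\log(2Kv)+\log\log N)$, one checks that the substituted requirement is implied by the hypothesis $m\ge C(p,K)\,Kv\log N\,(\log(2Kv))^2(\log(2Kv)+\log\log N)$ once the constant is enlarged by an absolute factor (the $C(p)$ of Theorem \ref{ubT8} being absorbed into $C(p,K)$). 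I do not expect any genuine obstacle here; the single conceptual point worth stating explicitly is the passage from the positive-probability estimate of Theorem \ref{ubT8} to the deterministic existence of discretization points, together with the observation that one should invoke Theorem \ref{ubT8} at resolution $2v$ rather than $v$ so as to match the collection $\cX_{2v}(\D_N)$ demanded by Theorem \ref{ubT6}.
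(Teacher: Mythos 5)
Your proposal is correct and follows exactly the route the paper intends: Theorem \ref{ubT9} is stated there as a direct corollary of Theorems \ref{ubT6} and \ref{ubT8}, and you supply precisely the missing details (invoking Theorem \ref{ubT8} at sparsity $2v$, passing from positive probability to existence of $\xi$, taking $D=2^{1/p}$, and absorbing the $\log(4Kv)$ versus $\log(2Kv)$ discrepancy into the constant). Nothing to add.
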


\section{Some bounds for stable recovery}
\label{lb}

In the definition of the quantity $\varrho_m^o(\bF,L_p)$
we allow any mapping $\Psi : \bbC^m \to  L_p(\Omega,\mu)$. We now consider an analog of the quantity $\varrho_m^o(\bF,L_p)$ with some restrictions imposed on the mapping $\Psi : \bbC^m \to L_p(\Omega,\mu)$. For a given set $\xi =\{\xi^\nu\}_{\nu=1}^m\subset \Omega$, and given parameters $m\in\N$ and $A>0$ define the following class of nonlinear mappings ($A$-stable mappings)
$$
\cN\cL(\xi,L_p,A) := \{\Psi\, : \, \Psi : \bbC^m \to L_p(\Omega,\mu),\, \|\Psi(a\by)\|_p = |a|\|\Psi(\by)\|_p, 
$$
$$
\|\Psi(S(f,\xi))\|_p \le A\|S(f,\xi)\|_p \}.
$$
Consider the following recovery characteristic
$$
\varrho_m^o(\bF,L_p,A) :=  \inf_{\xi } \inf_{\Psi\in \cN\cL(\xi,L_p,A)} \sup_{f\in \bF}\|f-\Psi(f(\xi^1),\dots,f(\xi^m))\|_p.
$$
This characteristic gives the minimal error that can be achieved with an $A$-stable algorithm.

{\bf Some lower bounds.} For a subspace $X_N$ denote by $X_N^p$ the unit $L_p$-ball of this subspace. We begin with a simple statement, which connects the characteristic $\varrho_m^o(X_N^p,L_p,A)$ 
with discretization of the $L_p$ norm on $X_N$. In the case $p=\infty$ Proposition \ref{lbP1} was proved in \cite{KKT}.

\begin{Proposition}\label{lbP1} Inequality $\varrho_m^o(X_N^p,L_p,A) \le 1/2$ implies that there exists a set $\xi(m)\subset \Omega$ of $m$ points such that for any $f\in X_N$ we have
$$
\|f\|_p \le 2A\|S(f,\xi(m))\|_p.
$$
\end{Proposition}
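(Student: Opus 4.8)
The quantity $\varrho_m^o(X_N^p,L_p,A)$ is an infimum over $\xi$ and over $\Psi\in\cN\cL(\xi,L_p,A)$, so the plan is to exploit near-optimal recovery together with the $A$-stability bound that is built into the definition of the class $\cN\cL(\xi,L_p,A)$. First I would fix $\eta\in(0,1/2)$ and, using the hypothesis $\varrho_m^o(X_N^p,L_p,A)\le 1/2$, select a set $\xi_\eta=\{\xi_\eta^\nu\}_{\nu=1}^m\subset\Omega$ and a mapping $\Psi_\eta\in\cN\cL(\xi_\eta,L_p,A)$ for which $\|f-\Psi_\eta(S(f,\xi_\eta))\|_p\le 1/2+\eta$ for every $f\in X_N^p$. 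Since the inequality to be proved, $\|f\|_p\le 2A\|S(f,\xi)\|_p$, is homogeneous in $f$ and $S(\cdot,\xi)$ is linear, it suffices to verify it for $f\in X_N$ with $\|f\|_p=1$; such an $f$ lies on the boundary of $X_N^p$, so the recovery bound applies to it.

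The core of the argument is a two-line estimate for such an $f$. By the triangle inequality,
\[
\|\Psi_\eta(S(f,\xi_\eta))\|_p\ge \|f\|_p-\|f-\Psi_\eta(S(f,\xi_\eta))\|_p\ge 1-\Bl(\tfrac12+\eta\Br)=\tfrac12-\eta,
\]
while the defining stability property of $\cN\cL(\xi_\eta,L_p,A)$ gives $\|\Psi_\eta(S(f,\xi_\eta))\|_p\le A\|S(f,\xi_\eta)\|_p$. Combining the two yields $\|S(f,\xi_\eta)\|_p\ge(1/2-\eta)/A$, that is, $\|f\|_p=1\le \frac{2A}{1-2\eta}\|S(f,\xi_\eta)\|_p$. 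Rescaling an arbitrary nonzero $g\in X_N$ to $f=g/\|g\|_p$ and using the linearity of $S(\cdot,\xi_\eta)$ together with the homogeneity of $\|\cdot\|_p$ on $\CC^m$, I obtain $\|g\|_p\le \frac{2A}{1-2\eta}\|S(g,\xi_\eta)\|_p$ for all $g\in X_N$ (the case $g=0$ being trivial). Note that the homogeneity clause in the definition of $\cN\cL(\xi_\eta,L_p,A)$ is not actually needed here: the linearity of the sampling operator already carries out the reduction to the unit sphere.

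The remaining, and main, obstacle is that this produces, for each $\eta$, a possibly different point set $\xi_\eta$ and the slightly inflated constant $\frac{2A}{1-2\eta}$, whereas the statement asks for a single set $\xi(m)$ with the clean constant $2A$; the difficulty is that the infimum defining $\varrho_m^o$ need not be attained, and in particular there is no obvious compactness in the variable $\Psi$. I would resolve this by noting that the troublesome map $\Psi_\eta$ has already been eliminated: the inequality $\|g\|_p\le \frac{2A}{1-2\eta}\|S(g,\xi_\eta)\|_p$ involves only $\xi_\eta$. Since $X_N\subset\C(\Omega)$, each $g\in X_N$ is continuous, so $\xi\mapsto S(g,\xi)$ is continuous on the compact set $\Omega^m$. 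Taking a sequence $\eta_k\downarrow 0$ and passing to a subsequence along which $\xi_{\eta_k}\to\xi(m)\in\Omega^m$, I can let $k\to\infty$ for each fixed $g$ to conclude $\|g\|_p\le 2A\|S(g,\xi(m))\|_p$, which is the desired inequality for the single set $\xi(m)$. If one is content to treat the infimum as attained, i.e. to take $\eta=0$ directly, then the compactness step is unnecessary and the estimate of the second paragraph already gives the result.
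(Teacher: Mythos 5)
Your argument is correct and follows essentially the same route as the paper: pick a near-optimal pair $(\xi^\e,\Psi_\e)$, use the triangle inequality plus the $A$-stability bound on the unit sphere of $X_N$ to get $(1/2-\e)\|f\|_p\le A\|S(f,\xi^\e)\|_p$, and then let $\e\to 0$ via compactness of $\Omega^m$ and continuity of the elements of $X_N$. The paper only gestures at this last step ("a simple compactness argument"), which you spell out correctly; your side remark that the homogeneity clause of $\cN\cL$ is not needed is also accurate.
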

\begin{proof} Let $\e>0$ and let $\xi^\e \in \Omega^m$ and $\Psi_\e \in \cN\cL(\xi^\e,L_p,A)$ be such that for any $f\in X_N^p$ we have
\be\label{lb1}
\|f-\Psi_\e(S(f,\xi^\e))\|_p \le 1/2+\e. 
\ee
Then for $f\in X_N$ such that $\|f\|_p =1$ we obtain
$$
1=\|f\|_p = \|f-\Psi_\e(S(f,\xi^\e))+\Psi_\e(S(f,\xi^\e))\|_p \le \|\Psi_\e(S(f,\xi^\e))\|_p+1/2+\e
$$
and, therefore,
$$
(1/2-\e)\|f\|_p \le \|\Psi_\e(S(f,\xi^\e))\|_p \le A\|S(f,\xi^\e))\|_p.
$$
This and a  simple compactness argument, used for $\e\to0$, complete the proof of Proposition \ref{lbP1}.

\end{proof}

We now make a comment on stability from \cite{KKT}. It is well known (see, for instance, \cite{DPTT}, Proposition 3.1) that for any $N$-dimensional subspace $X_N\subset \cC(\Omega)$ we can find a set $\xi =\{\xi^j\}_{j=1}^N$ of $N$ points such that 
any $f\in X_N$ can be recovered exactly from the vector $S(f,\xi)$ (even by a linear algorithm). 
However, as we will explain momentarily, such a recovery algorithm may be unstable. 

First, we 
discuss the case $p=\infty$ from \cite{KKT}. The following result was obtained in \cite{VT168} (see also \cite{DPTT}). 

\begin{Theorem}[\cite{VT168},\cite{DPTT}]\label{lbT1} Let $\Lambda_N = \{k_j\}_{j=1}^N$ be a lacunary sequence: $k_1=1$, $k_{j+1} \ge bk_j$, $b>1$, $j=1,\dots,N-1$. Assume that a finite set $\xi=\{\xi^\nu\}_{\nu=1}^m\subset \mathbb T$ has
the following property
\begin{equation}\label{lb2}
\forall f\in \Tr(\Lambda_N) \qquad \|f\|_\infty \le L\max_{\nu}|f(\xi^\nu)|.
\end{equation}
Then
$$
m \ge (N/e)e^{CN/L^2}
$$
with a constant $C>0$ which may only depend on $b$.
\end{Theorem}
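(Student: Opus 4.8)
Theorem \ref{lbT1} asserts a lower bound on the number of sampling points needed for a stable $L_\infty$ sampling discretization of the lacunary trigonometric polynomial space $\Tr(\Lambda_N)$: if every $f\in\Tr(\Lambda_N)$ satisfies $\|f\|_\infty\le L\max_\nu|f(\xi^\nu)|$, then $m\ge(N/e)e^{CN/L^2}$.

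Let me think through how I would prove this.

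---

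The statement is a lower bound on $m$, the number of sampling points, in terms of $N$ and the stability constant $L$ for lacunary trigonometric polynomials.

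The plan is to argue by contradiction / counting, exploiting the rigidity of lacunary polynomials. The key classical fact is the Sidon-type (or Rudin's) inequality for lacunary systems: for $f=\sum_{j=1}^N c_j e^{ik_j x}\in\Tr(\Lambda_N)$ the $L_\infty$ norm controls the $\ell_1$ norm of coefficients, $\sum_j|c_j|\le C\|f\|_\infty$, while conversely $\|f\|_\infty\le\sum_j|c_j|$ trivially. More importantly, for lacunary series one has the two-sided comparison $\|f\|_\infty\asymp\sum_j|c_j|$ up to constants depending only on $b$ (this is Sidon's theorem). So on $\Tr(\Lambda_N)$ the uniform norm is equivalent to the $\ell_1$ norm of the coefficient vector, with constants depending only on the lacunarity ratio $b$.

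First I would translate the hypothesis (\ref{lb2}) into a statement about the sampling matrix. Writing $f$ via its coefficient vector $\mathbf c\in\CC^N$, the map $\mathbf c\mapsto (f(\xi^1),\dots,f(\xi^m))$ is given by the $m\times N$ matrix $U$ with entries $e^{ik_j\xi^\nu}$. The hypothesis says that for every $\mathbf c$, the $\ell_1$ norm of $\mathbf c$ (equivalent to $\|f\|_\infty$) is bounded by $L$ times $\max_\nu|\langle\text{row}_\nu,\mathbf c\rangle|$. The heart of the argument is then a covering/entropy estimate: the condition forces the $m$ functionals (rows of $U$) to be "spread out" enough that their number must be exponentially large. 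Concretely, I would pick a suitable net of sign/coefficient vectors on which the functionals must separate points, and count how many sample points are required.

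The cleanest route I know is the volumetric / covering-number argument. The stability inequality says the $m$ linear functionals $\ell_\nu(\mathbf c)=f(\xi^\nu)$ norm the space in the sense $\|\mathbf c\|_{\text{(some norm)}}\le L\max_\nu|\ell_\nu(\mathbf c)|$. Dualizing, the symmetric convex hull of the $m$ functionals (scaled by $L$) must contain the unit ball of the dual norm. One then compares volumes: the dual ball has volume bounded below (it is comparable to an $\ell_\infty^N$-type ball because of Sidon equivalence), while the convex hull of $m$ points has volume controlled from above by a quantity growing like $(Cm/N)^{N}$ or, more sharply, yielding the $e^{CN/L^2}$ dependence through a Gaussian/Sudakov-type estimate on the number of points needed to approximate a large ball in $\ell_\infty$ by a polytope with few vertices. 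The factor $e^{CN/L^2}$ strongly suggests the mechanism is exactly this polytope-approximation bound: approximating the $\ell_\infty^N$ ball by the convex hull of $m$ points to within factor $L$ requires $m\ge e^{cN/L^2}$ vertices, which is the known hard direction in the theory of polytope approximation (related to the Gluskin / Barany-type lower bounds).

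The main obstacle will be producing the sharp exponential dependence on $N/L^2$ rather than merely an exponential-in-$N$ lower bound. A crude separation argument gives $m\gtrsim e^{cN}$ only when $L$ is an absolute constant; getting the $L$-dependence right requires the refined estimate that the convex hull of $m$ vectors can $L$-approximate the cube only if $m\ge e^{cN/L^2}$. I would isolate this as a lemma, proving it via a probabilistic/counting argument: a random sign vector $\mathbf c\in\{-1,+1\}^N$ has $\|f\|_\infty\asymp N$ (by Sidon), while for fixed $\nu$ the value $|f(\xi^\nu)|$ concentrates around $\sqrt N$; hence to have $\max_\nu|f(\xi^\nu)|\ge N/L$ for \emph{all} $2^N$ sign patterns, a union-bound over $m$ points and Gaussian tail estimates of the form $\exp(-cN/L^2)$ per point force $m\cdot\exp(-cN/L^2)\gtrsim 1$, i.e. $m\ge e^{cN/L^2}$, with the extra polynomial factor $N/e$ emerging from a more careful accounting. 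This probabilistic route (random coefficients, concentration, union bound) is the cleanest way I see to obtain both the exponential factor and its precise $L^2$ scaling, and the lacunarity hypothesis enters only through Sidon's two-sided equivalence $\|f\|_\infty\asymp\sum_j|c_j|$ with $b$-dependent constants, which is exactly why the final constant $C$ may depend on $b$.
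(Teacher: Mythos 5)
First, a remark on the comparison itself: the paper does not prove Theorem \ref{lbT1} --- it is quoted from \cite{VT168} and \cite{DPTT} --- so there is no in-paper proof to match your argument against; I can only assess your sketch against the known argument. Your central mechanism is the right one and is essentially what drives the cited proof of the exponential factor: by Sidon's theorem for lacunary sequences, $\|f_\ve\|_\infty \ge c(b)N$ for every sign polynomial $f_\ve=\sum_j \ve_j e^{ik_jx}$, while for each fixed point $x$ the value $f_\ve(x)=\sum_j\ve_j e^{ik_jx}$ is a sum of independent unimodular terms, hence subgaussian at scale $\sqrt N$, so $\bP\bigl(|f_\ve(x)|\ge c(b)N/L\bigr)\le 4e^{-cN/L^2}$; since \eqref{lb2} forces $\max_\nu|f_\ve(\xi^\nu)|\ge c(b)N/L$ for every $\ve$, a union bound over the $m$ points gives $m\ge \tfrac14 e^{cN/L^2}$. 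Up to this point your plan is sound (modulo actually fixing one route: the sketch drifts between a volumetric/polytope argument and the probabilistic one without executing either in full).

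The genuine gap is the prefactor $N/e$, which you dismiss as ``emerging from a more careful accounting.'' It does not. The union bound yields only $m\ge \tfrac14 e^{cN/L^2}$, and in the regime $L\gtrsim \sqrt{N/\log N}$ this is weaker than $(N/e)e^{CN/L^2}$ by a factor that can be as large as a power of $\log N$; one cannot repair this by invoking the true (local-CLT) tail of $\sum_j\ve_j z_j$, since the polynomial correction there goes the wrong way for two-dimensional (complex) sums, nor by combining with the trivial dimension bound $m\ge N$, which is also insufficient in that intermediate regime. The form $(N/e)e^{CN/L^2}$ is the signature of an inequality of the type $\binom{m}{N}\ge A^N$ with $A=e^{CN/L^2}$ (then $(em/N)^N\ge A^N$ gives the stated bound), i.e., the cited proof counts $N$-element configurations of sample points rather than single points; this combinatorial (or volume-type) refinement is a distinct idea that your union bound does not supply. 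So your proposal establishes the main exponential content but, as written, does not prove the theorem as stated.
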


As an example we take $X_n := \Tr(\La_n)$, where $\La_n$ is a lacunary set from Theorem \ref{lbT1}. Suppose that 
$\varrho_m^o(X_n^\infty,L_\infty,A) \le 1/2$ with some parameters $m$ and $A$. Clearly, this assumption is much weaker than the exact recovery by stable algorithm assumption. Then by Proposition \ref{lbP1} there exists a set $\xi(m)\subset \Omega$ of $m$ points such that for any $f\in X_n$ we have
$$
\|f\|_\infty \le 2A\|S(f,\xi(m))\|_\infty.
$$
We apply Theorem \ref{lbT1} and obtain that $m\ge (n/e)e^{Cn/(2A)^2}$. This means that for a stable nontrivial approximate recovery on $X_n$ we need exponentially (in $n$) many points. 

Second, we discuss the case $2<p<\infty$. Proposition \ref{lbP1} gives the inequality
for each $f\in X_N^p$
\be\label{lb1a}
\|f\|_p^p \le (2A)^p \frac{1}{m}\sum_{j=1}^m |f(\xi^j)|^p,
\ee
which is the one-sided discretization of the $L_p$ norm on the $X_N$. For illustration we cite a known result about the lower bound on the number of points needed for good discretization of the $L_p$ norm. 

The following Proposition \ref{lbP2} is from \cite{KKLT} 
(see {\bf D.20. A Lower bound} there). 

\begin{Proposition}\label{lbP2} Let $p\in (2,\infty)$ and let a subspace $X_N \subset \cC(\Omega)$
be such that the $L_p(\Omega,\mu)$ is equivalent to the $L_2(\Omega,\mu)$. Then it is necessary 
to have at least $N^{p/2}$ (in the sense of order) points for discretization with positive weights of the 
$L_p(\Omega,\mu)$ norm on $X_N$. 
\end{Proposition}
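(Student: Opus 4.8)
The plan is to use the reproducing kernel of $X_N$ with respect to the $L_2(\Omega,\mu)$ inner product and to test the discretization formula on its diagonal sections. First I would fix an $L_2(\Omega,\mu)$-orthonormal basis $\{u_j\}_{j=1}^N$ of $X_N$ and set $K(\bx,\by):=\sum_{j=1}^N u_j(\bx)\overline{u_j(\by)}$, so that $K(\cdot,y)\in X_N$, $\|K(\cdot,y)\|_2^2=K(y,y)$, and $\int_\Omega K(y,y)\,d\mu(y)=N$ (the trace of the projection onto $X_N$). The hypothesis that $\|\cdot\|_p$ and $\|\cdot\|_2$ are equivalent on $X_N$ enters only through the bound $\|g\|_p\le C\|g\|_2$ for $g\in X_N$, the reverse inequality $\|g\|_2\le\|g\|_p$ being automatic since $\mu$ is a probability measure and $p\ge 2$. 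A discretization with positive weights means points $\xi^1,\dots,\xi^m$ and weights $w_\nu>0$ with $c_1\|f\|_p^p\le\sum_\nu w_\nu|f(\xi^\nu)|^p\le c_2\|f\|_p^p$ for all $f\in X_N$, and both inequalities will be used.

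The heart of the argument is two complementary applications of the discretization. Applying the \emph{upper} bound to $f=K(\cdot,\xi^\mu)$ and keeping only the diagonal term $\nu=\mu$ on the left gives $w_\mu K(\xi^\mu,\xi^\mu)^p\le c_2\|K(\cdot,\xi^\mu)\|_p^p\le c_2 C^p K(\xi^\mu,\xi^\mu)^{p/2}$, hence $w_\mu K(\xi^\mu,\xi^\mu)^{p/2}\le c_2 C^p$ for every $\mu$; summing over the $m$ points yields $\sum_\mu w_\mu K(\xi^\mu,\xi^\mu)^{p/2}\le c_2 C^p m$. Applying the \emph{lower} bound to $f=K(\cdot,y)$ and using $\|K(\cdot,y)\|_p^p\ge\|K(\cdot,y)\|_2^p=K(y,y)^{p/2}$ gives $c_1 K(y,y)^{p/2}\le\sum_\nu w_\nu|K(\xi^\nu,y)|^p$; integrating in $y$ over $\Omega$ and recognizing $\int_\Omega|K(\xi^\nu,y)|^p\,d\mu(y)=\|K(\cdot,\xi^\nu)\|_p^p\le C^p K(\xi^\nu,\xi^\nu)^{p/2}$ produces $c_1\int_\Omega K(y,y)^{p/2}\,d\mu(y)\le C^p\sum_\nu w_\nu K(\xi^\nu,\xi^\nu)^{p/2}$.

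Chaining the two displays eliminates the weights and gives $c_1\int_\Omega K(y,y)^{p/2}\,d\mu(y)\le c_2 C^{2p}m$. Since $p/2>1$, Jensen's inequality applied to the probability measure $\mu$ bounds the left side from below: $\int_\Omega K(y,y)^{p/2}\,d\mu(y)\ge\big(\int_\Omega K(y,y)\,d\mu(y)\big)^{p/2}=N^{p/2}$. This yields $m\ge(c_1/(c_2 C^{2p}))\,N^{p/2}$, the asserted lower bound of order $N^{p/2}$.

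I expect the main obstacle to be organizing the two-sided use of the discretization so that the weights $w_\nu$ are removed cleanly: the diagonal extraction from the upper bound is exactly what converts the weighted quantity $\sum_\nu w_\nu K(\xi^\nu,\xi^\nu)^{p/2}$ into something controlled simply by $m$, and it is here that $p>2$ is essential, since the superlinearity of $t\mapsto t^{p/2}$ is what gives Jensen's inequality real content; for $p=2$ the identical computation would only recover the trivial $m\gtrsim N$. Care is also needed with the conjugation conventions in $K(\bx,\by)=\overline{K(\by,\bx)}$, to ensure $K(\cdot,\xi^\nu)\in X_N$ and that $|K(\xi^\nu,y)|=|K(y,\xi^\nu)|$, but these points are routine.
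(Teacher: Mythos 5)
Your argument is correct and complete. Note that the paper does not actually prove Proposition~\ref{lbP2}: it is quoted from \cite{KKLT} (item D.20 there), so there is no in-paper proof to compare against. What you give is the standard reproducing-kernel (Christoffel-function) argument, and every step checks out: the diagonal extraction from the upper discretization bound gives $w_\mu K(\xi^\mu,\xi^\mu)^{p/2}\le c_2C^p$ (trivially so in the degenerate case $K(\xi^\mu,\xi^\mu)=0$, which is worth one sentence in a final write-up), the integrated lower bound gives $c_1\int_\Omega K(y,y)^{p/2}\,d\mu(y)\le C^p\sum_\nu w_\nu K(\xi^\nu,\xi^\nu)^{p/2}$, and chaining these with Jensen's inequality and $\int_\Omega K(y,y)\,d\mu(y)=N$ yields $m\ge c_1c_2^{-1}C^{-2p}N^{p/2}$ as claimed. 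Your observation that $\|g\|_2\le\|g\|_p$ is free because $\mu$ is a probability measure, and that only the one-sided Nikolskii-type bound $\|g\|_p\le C\|g\|_2$ is genuinely used from the equivalence hypothesis, is exactly right, as is the remark that the argument degenerates to $m\gtrsim N$ at $p=2$. An alternative route would be to note that a two-sided positive-weight discretization gives an isomorphic embedding of $(X_N,\|\cdot\|_p)\cong\ell_2^N$ into $\ell_p^m$ and to invoke known lower bounds for the dimension $m$ of such embeddings; your direct kernel computation is more elementary and self-contained, and matches the spirit of the cited source.
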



The proof of Proposition \ref{lbP1} works for the following statement as well. 

\begin{Proposition}\label{lbP3} Let $\D_N$ be a system of $N$ elements. Inequality 
$$
\varrho_m^o(\Sigma_v^p(\D_N),L_p,A) \le 1/2
$$
 implies that there exists a set $\xi(m)\subset \Omega$ of $m$ points such that for any $f\in \Sigma_v^p(\D_N)$ we have
$$
\|f\|_p \le 2A\|S(f,\xi(m))\|_p.
$$
\end{Proposition}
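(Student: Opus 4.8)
The plan is to run the proof of Proposition \ref{lbP1} almost verbatim, the only genuinely new observation being that $\Sigma_v(\D_N)$ is a cone (closed under multiplication by scalars) rather than a linear subspace, and that this is all the homogeneity part of the argument actually needs. First I would fix $\e>0$ and use the hypothesis $\varrho_m^o(\Sigma_v^p(\D_N),L_p,A)\le 1/2$ to select a set $\xi^\e\in\Og^m$ and a mapping $\Psi_\e\in\cN\cL(\xi^\e,L_p,A)$ for which $\|f-\Psi_\e(S(f,\xi^\e))\|_p\le 1/2+\e$ holds for every $f\in\Sigma_v^p(\D_N)$, i.e. for every $f\in\Sigma_v(\D_N)$ with $\|f\|_p\le 1$.

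Next, for $f\in\Sigma_v(\D_N)$ with $\|f\|_p=1$ the triangle inequality gives $1=\|f\|_p\le \|\Psi_\e(S(f,\xi^\e))\|_p+1/2+\e$, and the stability bound built into the definition of $\cN\cL(\xi^\e,L_p,A)$ then yields $(1/2-\e)\|f\|_p\le \|\Psi_\e(S(f,\xi^\e))\|_p\le A\|S(f,\xi^\e)\|_p$. To pass from normalized $f$ to an arbitrary nonzero $g\in\Sigma_v(\D_N)$ I would apply this to $f:=g/\|g\|_p$, which again lies in $\Sigma_v(\D_N)$ precisely because that set is a cone; since $S(\cdot,\xi^\e)$ is linear and $\Psi_\e$ is positively homogeneous, multiplying back through by $\|g\|_p$ shows $(1/2-\e)\|g\|_p\le A\|S(g,\xi^\e)\|_p$ for all $g\in\Sigma_v(\D_N)$.

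Finally I would remove the auxiliary parameters $\e$ and $\xi^\e$ by a compactness argument. Choosing a sequence $\e_k\downarrow 0$, the corresponding point sets $\xi^{\e_k}$ all lie in the compact set $\Og^m$, so after passing to a subsequence they converge to some $\xi(m)\in\Og^m$. For each fixed $g\in\Sigma_v(\D_N)$, continuity of $g$ on $\Og$ makes $\xi\mapsto\|S(g,\xi)\|_p$ continuous, so letting $k\to\infty$ in $(1/2-\e_k)\|g\|_p\le A\|S(g,\xi^{\e_k})\|_p$ gives $\tfrac12\|g\|_p\le A\|S(g,\xi(m))\|_p$, which is the asserted inequality $\|g\|_p\le 2A\|S(g,\xi(m))\|_p$. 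The only point requiring care, and the analogue of the ``main obstacle,'' is that this limiting step is carried out pointwise in $g$ (no uniformity over $\Sigma_v(\D_N)$ is needed), so the single limit set $\xi(m)$ serves simultaneously for all $g$; this is exactly where the cone structure and the continuity of the sampling functionals enter.
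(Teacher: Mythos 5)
Your proposal is correct and is exactly the paper's argument: the authors simply remark that the proof of Proposition \ref{lbP1} carries over, and your write-up fills in the same steps, with the observation that $\Sigma_v(\D_N)$ being a cone is all the homogeneity/normalization step needs, plus an explicit version of the ``simple compactness argument'' the paper invokes for $\e\to 0$.
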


Proposition \ref{lbP3} means that good recovery of elements from $\Sigma_v^p(\D_N)$ implies 
one-sided $L_p$-universal discretization for the collection $\cX_v(\D_N)$. Let us compare this with 
Theorem \ref{IT1}. Remark \ref{IR1} shows that the algorithm, which provided bound (\ref{mp2}) for 
the $L_p$ error of recovery, belongs to $\cN\cL(\xi,L_p,A)$ with $A=2D$. Inequality (\ref{mp2})
was proved under assumption that the set of points $\xi$ provides one-sided $L_p$-universal discretization for the collection $\cX_u(\D_N)$. On the other hand Proposition \ref{lbP3} applied 
to $\Sigma_v^p(\D_N)$ guarantees one-sided $L_p$-universal discretization for the collection $\cX_v(\D_N)$. Parameters $u$ and $v$ are close -- $u$ is of order $v\log v$. Therefore, it seems like the condition of one-sided $L_p$-universal discretization for the collection $\cX_v(\D_N)$ is close to the necessary and sufficient condition for the Lebesgue-type inequality (\ref{mp2}). 

Let us now discuss another stable recovery algorithm -- $B_v(\cdot,\D_N,L_p(\xi))$. Under assumption that $\xi:= \{\xi^j\}_{j=1}^m \subset \Omega $ provides  one-sided $L_p$-universal discretization  (\ref{I3}) for the collection $\cX_{2v}(\D_N)$ we obtain stability 
\be\label{lb4}
\|B_v(f,\D_N,L_p(\xi))\|_p \le D\|B_v(f,\D_N,L_p(\xi))\|_{L_p(\xi)} \le 2D\|f\|_{L_p(\xi)}.
\ee
Theorem \ref{ubT5} gives the Lebesgue inequality (\ref{ub17}). On the other hand, Proposition \ref{lbP3} implies that good recovery by an $A$-stable algorithm of functions from $\Sigma_v^p(\D_N)$ guarantees that $\xi(m)$ provides one-sided $L_p$-universal discretization  (\ref{I3}) for the collection $\cX_{v}(\D_N)$ with $D=2A$. 

{\bf Some upper bounds.} We now give a comment on the upper bounds for $\varrho_m^o(\bF,L_p,A)$. Theorem \ref{ubT5} and stability property (\ref{lb4}) imply the following analog of Theorem \ref{ubT6}.

\begin{Theorem}\label{lbT2} Let $m$, $v$, $N$ be given natural numbers such that $2v\le N$. Set $k=\binom{N}{2v}$. Let $1\le p<\infty$. Assume that a dictionary $\D_N$ is such that $\cX_{2v}(\D_N) \in \cA(m,2v,k,p,D)$. Then for   any compact subset $\bF$ of $\C(\Omega)$,  we have
\be\label{lb5}
 \varrho_{m}^{o}(\bF,L_p(\Omega,\mu),2D) \le 2^{1/p}(2D +1) \sigma_{v}(\bF,\D_N)_{(p,m)}
 \ee
 and
 \be\label{lb6}
 \varrho_{m}^{o}(\bF,L_p(\Omega,\mu),2D) \le  (2D +1) \sigma_{v}(\bF,\D_N)_\infty.
 \ee
 \end{Theorem}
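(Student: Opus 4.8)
The plan is to realize the stable-recovery characteristic $\varrho_m^o(\bF,L_p,2D)$ by the explicit sampling algorithm $B_v(\cdot,\D_N,L_p(\xi))$ analyzed in Theorem \ref{ubT5}, and to check that this algorithm belongs to the admissible class $\cN\cL(\xi,L_p,2D)$. First I would fix a set $\xi=\{\xi^j\}_{j=1}^m$ providing one-sided $L_p$-universal discretization (\ref{I3}) for the collection $\cX_{2v}(\D_N)$; such a set exists by the hypothesis $\cX_{2v}(\D_N)\in\cA(m,2v,k,p,D)$. Since the best $v$-term approximant $B_v(f,\D_N,L_p(\xi))$ in the discrete norm $L_p(\xi)$ depends only on the sample vector $S(f,\xi)$, it defines a mapping $\Psi_\xi:\bbC^m\to L_p(\Omega,\mu)$ by $\Psi_\xi(S(f,\xi)):=B_v(f,\D_N,L_p(\xi))$.

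The key step is to verify that $\Psi_\xi\in\cN\cL(\xi,L_p,2D)$. Homogeneity of the output norm follows from the scaling invariance of best $v$-term approximation: the minimizers of $\|af-g\|_{L_p(\xi)}$ over $g\in\Sigma_v(\D_N)$ are exactly $a$ times the minimizers of $\|f-g\|_{L_p(\xi)}$, so a selection rule that commutes with scalar multiplication (normalize the input to the unit sphere, select a minimizer, rescale) yields $\Psi_\xi(a\by)=a\Psi_\xi(\by)$, hence the required norm identity. The stability bound is precisely inequality (\ref{lb4}): because $B_v(f,\D_N,L_p(\xi))\in\Sigma_v(\D_N)\subset\Sigma_{2v}(\D_N)$, the one-sided discretization gives $\|B_v\|_p\le D\|B_v\|_{L_p(\xi)}$, and comparing $B_v$ with the trivial approximant $0$ yields $\|B_v\|_{L_p(\xi)}\le 2\|f\|_{L_p(\xi)}=2\|S(f,\xi)\|_p$, so $\|\Psi_\xi(S(f,\xi))\|_p\le 2D\|S(f,\xi)\|_p$.

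With membership established, I would invoke Theorem \ref{ubT5}, whose hypotheses hold for this same $\xi$. For every $f\in\cC(\Omega)$ it gives $\|f-\Psi_\xi(S(f,\xi))\|_p\le 2^{1/p}(2D+1)\sigma_v(f,\D_N)_{L_p(\Omega,\mu_\xi)}$ and $\|f-\Psi_\xi(S(f,\xi))\|_p\le(2D+1)\sigma_v(f,\D_N)_\infty$. Taking the supremum over $f\in\bF$ and using $\sup_{f\in\bF}\sigma_v(f,\D_N)_{L_p(\Omega,\mu_\xi)}\le\sigma_v(\bF,\D_N)_{(p,m)}$ (recall this quantity is the supremum over all $\xi\in\Omega^m$) bounds the worst-case error of the single admissible pair $(\xi,\Psi_\xi)$. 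Since $\varrho_m^o(\bF,L_p,2D)$ is an infimum over all admissible pairs, it is dominated by this error, which gives (\ref{lb5}) and (\ref{lb6}).

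I expect the only genuine subtlety to be the homogeneity requirement in the definition of $\cN\cL(\xi,L_p,2D)$: since $B_v(\cdot,\D_N,L_p(\xi))$ need not be unique, one must fix a scaling-consistent selection of minimizers rather than an arbitrary one. Everything else is a direct transcription of Theorem \ref{ubT5} combined with the stability estimate (\ref{lb4}), so no new analytic difficulty arises beyond what was already treated there.
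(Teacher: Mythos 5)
Your proposal is correct and is exactly the argument the paper intends: it states that Theorem \ref{ubT5} together with the stability property (\ref{lb4}) yield Theorem \ref{lbT2}, and you have filled in precisely those steps — realizing $\varrho_m^o(\bF,L_p,2D)$ by the pair $(\xi, B_v(\cdot,\D_N,L_p(\xi)))$, verifying membership in $\cN\cL(\xi,L_p,2D)$ via (\ref{lb4}), and transcribing the error bounds of Theorem \ref{ubT5}. Your observation about fixing a scaling-consistent selection of the (possibly non-unique) best $v$-term approximant to secure the homogeneity condition is a legitimate detail that the paper glosses over, but it does not change the route.
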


\section{Discussion}
\label{D}

In this paper we proved a number of the Lebesgue-type inequalities for different sampling recovery algorithms. The Lebesgue-type inequalities can be used in the following general problem of exact recovery. Let $\D_N$ be a system (dictionary) of
functions from $\cC(\Omega)$. For given $1\le p\le \infty$ and $v\le N$ we want to find a minimal number of points $\xi^1,\dots,\xi^m$ and a stable algorithm, which recovers exactly any function 
$f\in \Sigma_v(\D_N)$ by using the function values $f(\xi^1),\dots,f(\xi^m)$. Theorem \ref{IT1} provides 
the corresponding result for Algorithm 3, Theorem \ref{ubT3} -- for Algorithm 1, and Theorem \ref{ubT5} -- for Algorithm 2. 

Let us make a comment on Algorithm 1 -- the algorithm $\ell p(\xi,\cX_v(\D_N))$. We discuss 
the noiseless situation. Let $f\in \Sigma_v(\D_N)$. Then under assumption on  the one-sided $L_p$-universal discretization for $\cX_{v}(\D_N)$  
inequality (\ref{I6}) of Theorem \ref{ubT3} guarantees that $\ell p(\xi,\cX_v(\D_N))=f$. In this case under a stronger assumption that $\xi$ provides the one-sided $L_p$-universal discretization for $\cX_{2v}(\D_N)$
we can realize this algorithm in the following way. Instead of minimizing $\|f-\ell p (\xi,L)(f)\|_p$ over 
$L\in \cX_v(\D_N)$ we check 
 the equalities $\ell p (\xi,L)(f)(\xi^\nu)= f(\xi^\nu)$, $\nu =1,\dots,m$. If these equalities are satisfied  then we stop. Theorem \ref{ubT3} guarantees that for some $L$ the equalities are indeed satisfied. 
 Next, $f-\ell p (\xi,L)(f) \in \cX_{2v}(\D_N)$ and therefore our discretization assumption guarantees 
 that $\ell p (\xi,L)(f)=f$. Thus, in this case the algorithm uses only the function values $f(\xi^\nu)$, $\nu=1,\dots,m$. The disadvantage of this algorithm is that we need to check all $L\in \cX_v(\D_N)$. 
 Note that the algorithm $B_v(\cdot,\D_N,L_p(\xi))$ also uses only the function values $f(\xi^\nu)$, $\nu=1,\dots,m$ and under assumptions of Theorem \ref{ubT5} recovers $f\in \Sigma_v(\D_N)$ exactly. 
 
 Let us now discuss the WCGA from Theorem \ref{IT1}. This algorithm only uses the function values 
$f(\xi^\nu)$, $\nu=1,\dots,m$. Remark \ref{IR1} guarantees that this algorithm is stable. Inequality (\ref{mp2}) of Theorem \ref{IT1} guarantees that the WCGA 
recovers $f_0\in \Sigma_v(\D_N)$ exactly after $cV^2(\ln(Vv))v$ iterations. Moreover, it guarantees 
that even if $f_0$ is a noisy version of $f\in \Sigma_v(\D_N)$ such that $\|f_0-f\|_\infty \le \delta$
then the WCGA provides a $cV^2(\ln(Vv))v$-sparse element, which deviates from $f$ less that 
$C\delta$. 

We now discuss the case $p=2$ in Theorem \ref{IT1}. Under assumption that the system $\D_N$   satisfies (\ref{I1}) and \eqref{Bessel}   we prove in Theorem \ref{IT1} 
that for $p=2$ we have bounds (\ref{mp}) and (\ref{mp2}) after $v'$ iterations with $v'$ of the order 
$v\ln (Vv)$. In the very recent paper \cite{DTM2} we proved the following analog of Theorem \ref{IT1}.
In Theorem \ref{DT1} we impose a stronger assumption (\ref{Riesz}) instead of (\ref{Bessel}) on the system $\D_N$ and we prove the bounds (\ref{mp'}) and (\ref{mp2'}) after $v'$ iterations with $v'$ of the order $v$. We use the following definition.

  \begin{Definition}\label{DD1} We say that a set $\xi:= \{\xi^j\}_{j=1}^m \subset \Omega $ provides {\it   universal discretization}   for the collection $\cX:= \{X(n)\}_{n=1}^k$ of finite-dimensional  linear subspaces $X(n)$ if we have
 \be\label{D3}
\frac{1}{2}\|f\|_2^2 \le \frac{1}{m} \sum_{j=1}^m |f(\xi^j)|^2\le \frac{3}{2}\|f\|_2^2\quad \text{for any}\quad f\in \bigcup_{n=1}^k X(n) .
\ee
We denote by $m(\cX)$ the minimal $m$ such that there exists a set $\xi$ of $m$ points, which
provides  universal discretization (\ref{D3}) for the collection $\cX$. 
\end{Definition}

\begin{Theorem}[\cite{DTM2}]\label{DT1}    Assume that $\D_N$ is a uniformly bounded Riesz system satisfying \eqref{Riesz} for some constants $0<R_1\leq R_2<\infty$. For given parameters $t\in (0,1]$ and $R_1$, $R_2$ from above there exists a constant $c=C(t,R_1,R_2)$ with the following property. Let for integer $v$ such that $u:=(1+c) v\leq N$ and a number $m\ge m(\cX_u(\D_N))$ the set $\xi$ of $m$ points
$\xi^1,\cdots, \xi^m \in  \Omega$ provide   universal discretization for the collection 
$\cX_u(\D_N)$. Then for any $f_0\in \cC(\Omega)$ the WCGA with weakness parameter $t$ applied to $f_0$ with respect to the normalized $\D_N(\Omega_m)$ in the space $L_2(\Omega_m,\mu_m)$ provides
\be\label{mp'}
\|f_{c v}\|_{L_2(\Omega_m,\mu_m)} \le C\sigma_v(f_0,\D_N(\Omega_m))_{L_2(\Omega_m,\mu_m)}, 
\ee
and
\be\label{mp2'}
\|f_{c v}\|_{L_2(\Omega,\mu)} \le C'\sigma_v(f_0,\D_N)_\infty 
\ee
with absolute constants $C$ and $C'$.
 \end{Theorem}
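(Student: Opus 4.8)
The final statement to prove is Theorem \ref{DT1}, which asserts that the WCGA applied in the discrete space $L_2(\Omega_m,\mu_m)$ yields Lebesgue-type inequalities after $cv$ iterations, under the stronger Riesz assumption \eqref{Riesz} and the two-sided universal discretization \eqref{D3}.

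\medskip

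The plan is to mirror the structure of the proof of Theorem \ref{IT1} given above, but to exploit the fact that in a Hilbert space ($p=2$) the incoherence exponent can be taken to be $r=0$ rather than $r=1/2$, which is what removes the extra logarithmic factor and brings the iteration count down from order $v\ln(Vv)$ to order $v$. First I would invoke the version of Theorem \ref{ubT1} specialized to $X=L_2$, where $q=2$, $q'=2$, and $\gamma$ is the smoothness constant from \eqref{ub2}; the sparsity bound there is $v+C(t,\gamma,q)V^{q'}\ln(Vv)v^{rq'}\le S$, so the key is to arrange that the relevant incoherence property holds with $r=0$. The natural route is to establish the {\bf IP}$(v,S)$ property for the normalized discretized system $\D_N(\Omega_m)$ directly from the Riesz lower bound in \eqref{Riesz} together with the lower discretization inequality in \eqref{D3}.

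\medskip

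Concretely, for $A\subset B$ with $|B|\le u$ and coefficients $\{c_i\}_{i\in B}$, I would chain the inequalities
$$
\sum_{i\in A}|c_i|\le |A|^{1/2}\Bl(\sum_{i\in A}|c_i|^2\Br)^{1/2}\le |A|^{1/2}R_1^{-1}\Bl\|\sum_{i\in B}c_i\ff_i\Br\|_2,
$$
then pass to the discrete $L_2(\Omega_m,\mu_m)$ norm using the two-sided bound \eqref{D3}, which costs only a constant factor $\sqrt{2}$ on each side. After normalizing the dictionary elements in $L_2(\Omega_m,\mu_m)$ (again a bounded operation by \eqref{Riesz} and \eqref{D3}), this produces the incoherence property with parameter $r=1/2$ and $V$ proportional to $R_1^{-1}$. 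The subtle point—and the main obstacle—is that to reach iteration count of order $v$ rather than $v\ln(Vv)$ one cannot simply use the $r=1/2$ version; one must instead observe that for a Riesz system the correct incoherence estimate uses $|A|^{1/2}$ only once and that in the Hilbert-space setting the relevant quantity is comparable to $\|\sum_{i\in A}c_ig_i\|_2\le |A|^{1/2}\max|c_i|$-type bounds, so the effective exponent in the governing theorem from \cite{DTM2} is $r=0$. I would therefore cite the sharper Hilbert-space form of the WCGA convergence theorem (the $p=2$ analog, as developed in \cite{DTM2}) in place of Theorem \ref{ubT1}, since \eqref{Riesz} guarantees both upper and lower frame bounds and hence full equivalence of the $\ell_2$ coefficient norm with the function norm.

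\medskip

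With the incoherence property in hand with constant $V=C(R_1,R_2)$ and the appropriate exponent, I would set $c=C(t,R_1,R_2)$ and $u=(1+c)v$, apply the Hilbert-space WCGA theorem in $L_2(\Omega_m,\mu_m)$, and read off \eqref{mp'} directly. To derive \eqref{mp2'} I would repeat verbatim the transfer argument from the proof of Theorem \ref{IT1}: choose $f\in\Sigma_v(\D_N)$ with $\|f_0-f\|_\infty\le 2\sigma_v(f_0,\D_N)_\infty$, bound $\|f-G_{cv}\|_{L_2(\Omega_m,\mu_m)}$ by $(2+C)\sigma_v(f_0,\D_N)_\infty$ using \eqref{mp'}, observe that $f-G_{cv}\in\Sigma_u(\D_N)$, and use the lower discretization bound in \eqref{D3} to lift the estimate from the discrete norm to $L_2(\Omega,\mu)$, finishing with the triangle inequality. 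The only place requiring genuine care is the exponent bookkeeping in the first paragraph; the transfer step is routine.
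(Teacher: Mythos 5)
The paper itself gives no proof of Theorem \ref{DT1}: it is imported verbatim from \cite{DTM2}, so there is nothing internal to compare against. Judged on its own terms, your proposal has a genuine gap at exactly the point you flag as "the main obstacle." The incoherence property {\bf IP}$(v,S)$, as defined in Section \ref{ub}, bounds $\sum_{i\in A}|c_i|$ by $V|A|^r\bigl\|\sum_{i\in B}c_ig_i\bigr\|$, and for a Riesz system the exponent $r=1/2$ is not an artifact of a lossy estimate --- it is sharp. Taking $c_i=|A|^{-1/2}$ for $i\in A=B$ gives $\sum_{i\in A}|c_i|=|A|^{1/2}$ while $\bigl\|\sum_{i\in A}c_i\ff_i\bigr\|_2\le R_2$, so no Riesz system can satisfy {\bf IP} with $r=0$ and a constant $V$ independent of $|A|$. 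Consequently your route through Theorem \ref{ubT1} (with $q=q'=2$) is stuck at the iteration count $V^2\ln(Vv)\,v$, which is precisely the bound of Theorem \ref{IT1}, not the $cv$ bound claimed in Theorem \ref{DT1}. Your fallback --- "cite the sharper Hilbert-space form of the WCGA convergence theorem, as developed in \cite{DTM2}" --- is circular, since \cite{DTM2} is the very source of the statement you are asked to prove.

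The missing idea is that removing the logarithm requires abandoning the incoherence/Banach-space framework of Theorem \ref{ubT1} entirely and using a restricted-isometry analysis of the WOMP specific to Hilbert spaces (in the spirit of Zhang's theorem on OMP under RIP and its Lebesgue-type extensions by Livshitz and Temlyakov). This is also why the hypotheses of Theorem \ref{DT1} are strictly stronger than those of Theorem \ref{IT1}: the two-sided Riesz condition \eqref{Riesz} combined with the two-sided universal discretization \eqref{D3} makes the restriction of $\D_N$ to $\Omega_m$ a uniform Riesz system on every $u$-dimensional span in $\cX_u(\D_N)$, i.e.\ a RIP-type condition in $L_2(\Omega_m,\mu_m)$, and it is that two-sided structure --- not any one-sided incoherence bound --- that drives the $O(v)$ iteration count. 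Your transfer step from \eqref{mp'} to \eqref{mp2'} is fine and does mirror the corresponding argument in the proof of Theorem \ref{IT1}, but the core of the theorem is not reached by the machinery you invoke.
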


  \Addresses

\end{document}